\pgfplotsset{compat=1.17}
\definecolor{darkyellow}{RGB}{200,160,0}
\definecolor{darkgreen}{RGB}{50,175,50}
\def\tsc#1{\csdef{#1}{\textsc{\lowercase{#1}}\xspace}}
\newtheorem{theorem}{Theorem}[section]
\newtheorem{lemma}{Lemma}[section]
\newtheorem{problem}{Problem}[section]
\newtheorem{proposition}{Proposition}[section]
\newtheorem{corollary}{Corollary}[section]
\newtheorem{example}{Example}[section]
\newtheorem{remark}{Remark}[section]
\newtheorem{conjecture}{Conjecture}[section]
\numberwithin{figure}{section}
\newcommand{\bbR}{\mathbb{R}} 
\newcommand{\bbC}{\mathbb{C}} 
\newcommand{\diag}{\mathrm{diag}}
\begin{document}

\let\WriteBookmarks\relax
\def\floatpagepagefraction{1}
\def\textpagefraction{.001}

\shorttitle{Schur Stability of Image Reconstruction Operators}    

\shortauthors{D.~Banerjee and K.~N.~Chaudhury}  

\title [mode = title]{On the Schur Stability of Some Image Reconstruction Operators}  

\author[1]{Debraj~Banerjee}[orcid={0009-0005-8991-8903}]

\cormark[1]

\ead{debrajb@iisc.ac.in}

\author[1]{Kunal~N.~Chaudhury}[orcid={0000-0002-8136-605X}]

\ead{kunal@iisc.ac.in}

\credit{Supervision, Funding Acquisition, Writing - Review & Editing}

\affiliation[1]{organization={Department of Electrical Engineering, Indian Institute of Science},
            city={Bengaluru},
            postcode={560012}, 
            state={Karnataka},
            country={India}}

\cortext[1]{Corresponding author}

\begin{abstract}
We investigate an open problem arising in iterative image reconstruction. In its general form, the problem is to determine the stability of the parametric family of operators $P(t) = W (I-t B)$ and $R(t) = I-W + (I+tB)^{-1} (2W-I)$, where $W$ is a stochastic matrix and $B$ is a nonzero, nonnegative matrix. We prove that if $W$ is primitive, then there exists  $T > 0$ such that the spectral radii $\varrho(P(t))$ and $\varrho(R(t))$ are strictly less than $1$ for all $0 < t < T$. The proof combines standard perturbation theory for eigenvalues and an observation about the analyticity of the spectral radius. This argument, however, does not provide an estimate of $T$. To this end, we compute $T$ explicitly for specific classes of $W$ and $B$. Building on these partial results and supporting numerical evidence, we conjecture that if $B$ is positive semidefinite and satisfies certain technical conditions, then $\varrho(P(t)), \, \varrho(R(t))<1$ for all $0 < t <  2/\varrho(B)$. As an application, we show how these results can be applied to establish the convergence of certain iterative imaging algorithms.
\end{abstract}

\begin{keywords}
spectral radius  \sep Schur stability \sep stochastic matrix \sep perturbation theory  \sep image reconstruction
\\
\MSC \sep 15A45, 15B51, 47A55, 	68U10 , 94A08.
\end{keywords}

\maketitle

\section{Introduction}
\label{sec:intro}

We study a linear algebra problem motivated by iterative image reconstruction. We begin by stating the problem, highlighting the main difficulties, and explaining why existing results do not apply. We then present our results along with a conjecture. Finally, we explain how these findings connect to the convergence of iterative algorithms.

Throughout, we work in $\bbC^n$ or $\bbR^n$ for some fixed $n \geqslant 1$. We let $I \in \bbR^{n \times n}$ denote the identity matrix,  $e \in \mathbb{R}^n$ the all-ones vector, and $\varrho(M)$ the spectral radius of $M \in \bbC^{n \times n}$,
\begin{equation*}
\varrho(M) = \max \big\{ |\lambda| : \ \lambda \in \sigma(M) \big\},
\end{equation*}
where $\sigma(M)$ denotes the spectrum (set of eigenvalues) of $M$. We say $M$ is Schur stable (or simply \emph{stable}) if $\varrho(M) < 1$~\cite{Fleming2000SchurDStable}. For $v \in \bbR^n$, we write $v \geqslant 0$ if all components of $v$ are nonnegative, and $v > 0$ if all components are strictly positive. For $M \in \bbR^{n \times n}$, we write $M \geqslant 0$ if all its entries are nonnegative. 

We call $W \in \mathbb{R}^{n \times n}$ row-stochastic (or simply stochastic) if $W \geqslant 0$ and $We = e$, and doubly stochastic if, in addition, $e^\top W = e^\top$. We say $W$ is irreducible if, for every $i,j$, there exists $m \geqslant 1$ such that $(W^m)_{ij} > 0$, and primitive if a single such $m$ works for all $i, j$. If $W$ is an irreducible stochastic matrix, then it follows from the Perron-Frobenius theorem that $\varrho(W) = 1$ and $1$ is a simple eigenvalue. The vector $e$ is a right eigenvector, and there exists a positive left (Perron) eigenvector $\pi$ such that $\pi^\top e = 1$. Moreover, if $W$ is primitive, then $|\lambda| < 1$ for $\lambda \in \sigma(W), \lambda \neq 1$~(e.g., see, \cite{meyer2000matrix}).

Let $W \in \bbR^{n \times n}$ be a stochastic matrix, and let $B \in \bbR^{n \times n}$ be a nonzero matrix. For $t \geqslant 0$, consider the family of matrices
\begin{equation}
\label{def:P}
P(t) = W(I - tB),
\end{equation}
and
\begin{equation}
\label{def:R}
R(t) = I-W + (I+tB)^{-1} (2W-I),
\end{equation}
where we assume that the inverse $(I + tB)^{-1}$ exists. The problem is to estimate $\varrho(P(t))$ and $\varrho(R(t))$ for different values of $t$, subject to additional assumptions on $W$ and $B$. 

\begin{problem}
\label{prob:existence}
Under what assumptions on $W$ and $B$, do there exist $t_1, t_2 > 0$ such that $P(t_1)$ and $R(t_2)$ are stable? More generally, can we find $T > 0$ such that $P(t)$ and $R(t)$ are stable for all $0 < t < T$?
\end{problem}

Standard techniques for estimating the spectral radius include bounds based on matrix norms, Gershgorin’s theorem, Gelfand’s formula, the Collatz–Wielandt formula, and the Bauer–Fike theorem~\cite{meyer2000matrix}. However, these tools are difficult to apply to matrix products. For instance, $W$ and $I-tB$ can each have spectral radius $1$, yet their product can still be stable. Conversely, even if both $W$ and $I-tB$ are stable, their product need not be stable. In what follows, we collect some observations relevant to~\Cref{prob:existence}.

\begin{remark}
\label{rmk:relatedwork}
The problem of bounding the spectral radius of matrix products has been studied in the literature, but the existing results are limited and do not directly apply to~\Cref{prob:existence}. For instance,~\cite{Axtell2009,kozyakin2017minimax} addresses only nonnegative matrices. While $W \geqslant 0$,  $I - tB$ may contain negative components.  There are also Varga-type theorems on the stability of matrix products~\cite{Varga2000MatrixIterative,noutsos2006perron}. In particular, we can rewrite \eqref{def:P} as
\begin{equation*}
P(t) = - F V^{-1}, \quad F:=W, \, V = - (I-tB)^{-1},
\end{equation*}
where $(I-tB)^{-1}$ exists and is nonnegative for small $t$. Varga's theorem guarantees the stability of $-FV^{-1}$ provided that both $F+V$ and $V$ are Metzler~(i.e., have nonnegative off-diagonal components), and if $V$ is Hurwitz. However, in our case, neither $V$ nor $F+V$ is Metzler, so Varga’s criterion does not apply. Similarly, \eqref{def:R} can be expressed as
\begin{equation*}
R(t)^\top  = -FV^{-1}\,,\qquad F:=  (I-W^\top)(I+tB^\top) + (2W^\top-I),\ V := -(I+tB^\top).
\end{equation*}
In this case, V also fails to be Metzler.
\end{remark}

\begin{remark}
Since $P(0) = R(0)=W$ and $\varrho(W)=1$, we cannot use the continuity of the spectral radius to conclude that $P(t)$ and $R(t)$ are stable in a neighbourhood of $0$. On the other hand, $P(t)$ is not stable for large $t$. Indeed, if $WB$ is not nilpotent, we have from the continuity of $\varrho$ that
\begin{equation*}
\lim_{t \to \infty} \: \varrho(P(t)) = \lim_{t \to \infty} \: t  \, \varrho \left(\frac{1}{t} W - WB \right) = \infty.
\end{equation*}
As for $\varrho(R(t))$, predicting its behavior at infinity is not possible without further assumptions on $B$.
\end{remark}

\begin{remark}
\label{rmk:necessary}
Since $We=e$, it follows from~\eqref{def:P} that $P(t)\, e = e$ if $Be=0$. 
Consequently, $\varrho(P(t)) \geqslant 1$ for all $t >0$. Thus, it is necessary that $Be \neq 0$ for $P(t)$ to be stable for some $t > 0$. Moreover, even if $Be \neq 0$, $P(t)$ may fail to be stable if $W$ is not primitive. For example, consider the stochastic matrix
\begin{equation}
\label{eq:example_W_irr}
W = 
\begin{pmatrix} 0 & 1 \\ 
1 & 0 \end{pmatrix}.
\end{equation}
This is not primitive. If we choose  $B=W$, then 
\begin{equation*}
P(t) = W (I-tW) = W - t W^2 = W- t I.
\end{equation*}
A simple calculation shows that the eigenvalues of $P(t)$ are $-(1+t)$ and $1-t$; hence, $P(t)$ is not stable for any $t >0$. 

The condition $Be \neq 0$ is also necessary for the stability of $R(t)$. Indeed, if $Be=0$, then $(I + tB)^{-1}e=e$, which implies $R(t)\, e=e$; hence, $R(t)$ cannot be stable. As with $P(t)$, if $W$ is not primitive, stability cannot be guaranteed. For instance, let $W$ be as in~\eqref{eq:example_W_irr} and take
\begin{equation}
\label{eq:example_B=E/2}
B = 
\frac{1}{2}\begin{pmatrix} 1 & 1 \\ 
1 & 1 \end{pmatrix}.
\end{equation}
In this case,
\begin{equation*}
R(t) = 
\frac{1}{2(t+1)}
\begin{pmatrix}
-t & t+2 \\
t+2 &-t
\end{pmatrix},
\end{equation*}
whose eigenvalues are $-1$ and $1/(t+1)$. Hence, $R(t)$ is not stable for any $t>0$.
\end{remark}

Our main result is that, in addition to the stochasticity of $W$, the above necessary conditions on $W$ and $B$ are (almost) sufficient to resolve \Cref{prob:existence}. This comes as a consequence of a more general result.

\begin{theorem}
\label{thm:existence}
Let $W$ be primitive and stochastic, and let $B$ satisfy $\pi^\top \!Be > 0$, where $\pi$ is the left Perron eigenvector of $W$. Then there exists $T>0$ such that 
$P(t)$ and $R(t)$ are stable for all $0 < t <T$.
\end{theorem}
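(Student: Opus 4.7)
The plan is to exploit the fact that $W$ primitive implies the eigenvalue $1$ of $W$ is \emph{simple} and strictly dominates the remaining spectrum. Continuity of the spectral radius alone cannot conclude stability from $\varrho(W)=1$, but analytic perturbation theory can track the simple dominant eigenvalue individually; the goal is to show it moves strictly inside the open unit disk as $t$ grows from $0$, while continuity handles the subordinate eigenvalues. The argument should be uniform in $P(t)$ and $R(t)$.

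Concretely, $1$ is a simple eigenvalue of $W$ with right eigenvector $e$ and left eigenvector $\pi$ normalized by $\pi^\top e = 1$, and the remaining eigenvalues lie in a closed disk of some radius $\rho_0<1$. Writing $A(t)$ for either $P(t)$ or $R(t)$, the map $t\mapsto A(t)$ is analytic in a neighbourhood of $0$ (for $R(t)$ this uses invertibility of $I+tB$ for small $t$, via the Neumann series). By the standard first-order formula for a simple eigenvalue, the branch $\lambda(t)$ with $\lambda(0)=1$ is analytic near $0$ with $\lambda'(0)=\pi^\top A'(0)\,e$. For $P(t)=W-tWB$, we have $P'(0)=-WB$, and $\pi^\top W=\pi^\top$ gives $\lambda'(0)=-\pi^\top B e$. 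For $R(t)$, differentiating $(I+tB)^{-1}$ at $0$ yields $-B$, so $R'(0)=-B(2W-I)$; since $We=e$, we obtain $\lambda'(0)=-\pi^\top B(2e-e)=-\pi^\top B e$. By hypothesis this is strictly negative, and the Taylor expansion $\lambda(t)=1-(\pi^\top B e)t+O(t^2)$ gives $|\lambda(t)|^2 = 1 - 2(\pi^\top B e)t + O(t^2) < 1$ for all sufficiently small $t>0$.

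To close the argument, I would invoke continuous dependence of the full spectrum on the matrix entries (upper semicontinuity in the Hausdorff sense): since the $n-1$ subordinate eigenvalues of $W$ lie in the closed disk of radius $\rho_0<1$, there exists $T'>0$ such that for $0<t<T'$ the corresponding $n-1$ eigenvalues of $A(t)$ stay inside the closed disk of radius $(1+\rho_0)/2<1$. Combining this with the bound on the tracked branch $\lambda(t)$ and shrinking to a sufficiently small $T>0$ yields $\varrho(A(t))<1$ on $(0,T)$. The main obstacle is carrying out the derivative computation cleanly through the inverse $(I+tB)^{-1}$ in $R(t)$; once this is done, the identities $\pi^\top W=\pi^\top$ and $We=e$ collapse both cases to the single quantity $-\pi^\top B e$, yielding a unified proof for $P$ and $R$.
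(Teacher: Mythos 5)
Your proposal is correct and follows essentially the same route as the paper: analyticity of $P(t)$ and $R(t)$ near $t=0$, the first-order perturbation formula for the simple dominant eigenvalue $1$ of the primitive stochastic $W$, the computation $\lambda'(0)=\pi^\top P'(0)e=\pi^\top R'(0)e=-\pi^\top Be<0$ via $\pi^\top W=\pi^\top$ and $We=e$, and continuity of the remaining eigenvalues to keep them strictly inside the unit disk. The only cosmetic difference is that the paper packages the dominant-branch tracking and the continuity argument into a lemma asserting that $\varrho(M(z))$ itself is analytic near the origin with derivative $v^*M'(0)u$, whereas you argue directly on the branch $\lambda(t)$ and the subordinate spectrum.
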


We have the following corollaries of \Cref{thm:existence}.

\begin{corollary}
\label{corr1:existence}
Let $W$ be primitive and stochastic, and let $B$ be such that $Be \geqslant 0$ and $Be \neq 0$. Then there exists $T>0$ such that $P(t)$ and $R(t)$ are stable for all $0 < t <T$.
\end{corollary}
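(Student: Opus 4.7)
The plan is to deduce this corollary directly from \Cref{thm:existence}. The only thing that needs to be verified is that the hypotheses of the corollary, namely $Be \geqslant 0$ and $Be \neq 0$, imply the hypothesis of the theorem, namely $\pi^\top \! Be > 0$, where $\pi$ is the left Perron eigenvector of $W$.

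First I would invoke the Perron–Frobenius theorem for primitive stochastic matrices, as already recalled in the introduction: since $W$ is primitive (in particular irreducible) and stochastic, the left Perron eigenvector $\pi$ exists, is unique up to scaling, and crucially satisfies $\pi > 0$, i.e., every component of $\pi$ is strictly positive. Next, I would combine this with the assumption that $v := Be$ is a nonnegative vector with at least one strictly positive component. Writing $\pi^\top v = \sum_{i=1}^n \pi_i v_i$, every term in the sum is nonnegative, and at least one term (corresponding to an index $i$ with $v_i > 0$) is strictly positive, since $\pi_i > 0$ there as well. Hence $\pi^\top \! Be > 0$.

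With this inequality in hand, the hypotheses of \Cref{thm:existence} are satisfied, and the conclusion of the corollary, namely the existence of $T > 0$ such that $P(t)$ and $R(t)$ are both stable for all $0 < t < T$, follows immediately. I do not anticipate any real obstacle here: the proof is essentially a one-line reduction to the theorem, and its content is only to isolate a natural, easily checkable sufficient condition ($Be \geqslant 0$, $Be \neq 0$) that avoids having to compute $\pi$ explicitly.
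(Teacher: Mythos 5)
Your proof is correct and is exactly the argument the paper intends: since $W$ is primitive and stochastic, the Perron--Frobenius theorem gives $\pi > 0$, so $Be \geqslant 0$ with $Be \neq 0$ yields $\pi^\top \! Be > 0$, and \Cref{thm:existence} applies. Nothing further is needed.
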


\begin{corollary}
\label{corr2:existence}
Let $W$ be primitive and stochastic, and let $B \geqslant 0$ with $B \neq 0$. Then there exists $T>0$ such that 
$P(t)$ and $R(t)$ are stable for all $0 < t <T$.
\end{corollary}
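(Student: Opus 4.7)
The plan is to reduce \Cref{corr2:existence} to \Cref{corr1:existence}, so that all substantive work is already absorbed into \Cref{thm:existence}. Concretely, what must be checked is that the hypothesis ``$B \geqslant 0$ and $B \neq 0$'' implies the slightly weaker hypothesis ``$Be \geqslant 0$ and $Be \neq 0$'' that appears in \Cref{corr1:existence}.

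First, since every entry of $B$ is nonnegative, each component $(Be)_i = \sum_j B_{ij}$ is a sum of nonnegative numbers; hence $Be \geqslant 0$. Next, because $B$ is not the zero matrix, there exists a pair $(i,j)$ with $B_{ij} > 0$. For this particular row index $i$,
\begin{equation*}
(Be)_i \;=\; \sum_{k=1}^n B_{ik} \;\geqslant\; B_{ij} \;>\; 0,
\end{equation*}
so $Be$ has at least one strictly positive component and is in particular nonzero. Both hypotheses of \Cref{corr1:existence} now hold, and that corollary supplies the required $T > 0$.

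The main obstacle is, frankly, nonexistent: the entire analytic content of the statement is packaged inside \Cref{thm:existence}, and the argument above is purely a bookkeeping reduction. Alternatively, one could skip the intermediate step and appeal to \Cref{thm:existence} directly: by the Perron--Frobenius theorem, primitivity of $W$ forces the left Perron eigenvector $\pi$ to be strictly positive, and combining this with $Be \geqslant 0$, $Be \neq 0$ yields
\begin{equation*}
\pi^\top \!Be \;=\; \sum_{i=1}^n \pi_i \, (Be)_i \;>\; 0,
\end{equation*}
which is precisely the hypothesis of \Cref{thm:existence}.
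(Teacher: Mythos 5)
Your proposal is correct and matches the paper's intended route: the corollary is an immediate consequence of \Cref{thm:existence} (via \Cref{corr1:existence}), since $B \geqslant 0$, $B \neq 0$ gives $Be \geqslant 0$, $Be \neq 0$, and positivity of the Perron vector $\pi$ then yields $\pi^\top B e > 0$. Nothing is missing.
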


\Cref{thm:existence} is proved in~\Cref{sec:proofs}  using complex analytic techniques, specifically the perturbation theory of eigenvalues~\cite{kato1995}. That complex analysis comes up is not surprising since $P(t)$ and $R(t)$ are generally nonsymmetric and can have complex eigenvalues.

\begin{remark}
\label{rmk:counterexample1}
We illustrate the importance of the condition  $\pi^\top Be >0$ in~\Cref{thm:existence}. Let
\begin{equation}
\label{eq:counterexample1}
W = \frac{1}{10}\begin{pmatrix}
    7 & 3 \\
    6 & 4
\end{pmatrix}
\quad \mbox{and} \quad
B = \frac{1}{10}\begin{pmatrix}
    34 & -65 \\
    -65 &  126
\end{pmatrix}.
\end{equation}
The matrix $W$ is primitive and stochastic, with $\pi^\top = (2/3 \ \ 1/3)$. In this case, $\pi^\top Be = -1/30 < 0$, and we find that $\varrho(P(t))>1$ and $\varrho(R(t)) > 1$ for all $0 < t < 1/2$.
\end{remark}

\begin{remark}
\label{rmk:B_blur}
Perturbation theory does not provide an estimate for $T$, which is required for selecting the value of $t$ in applications. 
To get an idea about how large $T$ can be, we consider the example:
\begin{equation}
\label{eq:example_W_B_blur}
W = \begin{pmatrix}
   0.3 & 0.7 \\ 0.6 & 0.4
\end{pmatrix}, \quad 
H= \begin{pmatrix}
   0.913 & 0.087 \\ 0.087 & 0.913
\end{pmatrix}, \quad \mbox{and} \quad B = H^\top H.
\end{equation}
Clearly, $W$ and $B$ satisfy the assumptions in~\Cref{thm:existence}. 
The matrix $H$ acts as a blur (averaging) operator, and the particular choice of $B$ comes from a practical application discussed in \Cref{sec:application}. The spectral radii are shown in~\Cref{fig:blur}. We see that $P(t)$ is stable over the interval $(0,2)$, whereas $R(t)$ remains stable even beyond this range.
\end{remark}

\begin{figure}[t]
\centering
\begin{subfigure}[t]{0.47\textwidth}
   \centering
   \includegraphics[width=1.0\linewidth]{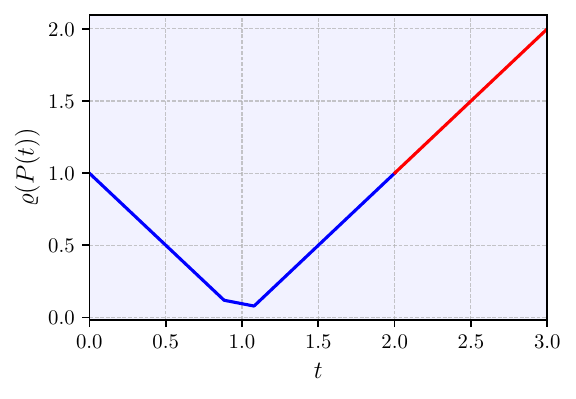}
\end{subfigure}
\hfill
\begin{subfigure}[t]{0.47\textwidth}
   \centering
   \includegraphics[width=1.0\linewidth]{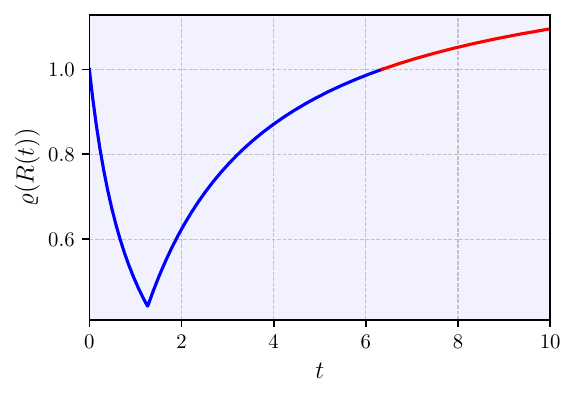}
\end{subfigure}
\caption{Spectral radii of $P(t)$ and $R(t)$ in~\Cref{rmk:B_blur}, illustrating the existence of a threhsold $T$ in~\Cref{thm:existence}. Blue indicates the stable regime ($\varrho < 1$), and red indicates instability ($\varrho \geqslant 1$). In both cases, there is a threshold below which the operators are stable and beyond which they become unstable.}
\label{fig:blur}
\end{figure}

The threshold $T=2$ in~\Cref{rmk:B_blur} matches $2/\varrho(B)$ since $\varrho(B)=1$. In fact, this threshold can be predicted when $W$ is symmetric and $B$ is positive semidefinite.  In particular, we have the following result, adapted from~\cite{ACK2023-contractivity}.

\begin{theorem}[\cite{ACK2023-contractivity}]
\label{thm:contractivity}
Let $W$ be symmetric, primitive, and stochastic, and $B$ be positive semidefinite with $Be \neq 0$. Then $P(t)$ and $R(t)$ are stable for all $0 < t < 2/\varrho (B)$.
\end{theorem}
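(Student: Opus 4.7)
The plan is to handle $P(t)$ first by a direct norm-inequality argument, and then to reduce the spectral analysis of $R(t)$ to that of $P(t)$ via a short algebraic identity. Two uniform bounds underlie both steps. Since $W$ is symmetric, stochastic, and primitive, its eigenvalues lie in $[-1, 1]$ with $1$ simple and the only unit-modulus element; hence $\|W\|_2 = 1$ and $\|Wz\| = \|z\|$ forces $z \in \mathrm{span}(e)$. Since $B$ is positive semidefinite and $t \in (0, 2/\varrho(B))$, the eigenvalues of $I - tB$ lie in $(-1, 1]$, so $\|I - tB\|_2 \leqslant 1$.

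For $P(t)$, I would suppose $\lambda \in \sigma(P(t))$ satisfies $|\lambda| \geqslant 1$ with eigenvector $x \neq 0$. Submultiplicativity applied to $W(I - tB)x = \lambda x$ then forces $|\lambda| = 1$ with every intermediate inequality an equality. The equality $\|Wz\| = \|z\|$ at $z := (I - tB) x$ forces $z = ce$; tracing through $x = (c/\lambda)e$ and $z = (I - tB)x$ yields $tBe = (1 - \lambda)e$. Reality of $B$ makes $\lambda$ real, so $\lambda \in \{1, -1\}$: the value $\lambda = 1$ contradicts $Be \neq 0$, and $\lambda = -1$ places $2/t$ in $\sigma(B)$, contradicting $t < 2/\varrho(B)$.

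For $R(t)$, the key step is to multiply $R(t)x = \lambda x$ by $(I + tB)$ and rearrange, which gives the identity
\begin{equation*}
(I - tB)\, W x \;=\; \bigl[\lambda I + t(\lambda - 1) B\bigr] x.
\end{equation*}
The squared Euclidean norm of the left-hand side is at most $\|x\|^2$, whereas expanding the right-hand side using $B = B^\top$ gives
\begin{equation*}
\bigl\|\lambda x + t(\lambda - 1) B x\bigr\|^2 \;=\; |\lambda|^2 \|x\|^2 \,+\, 2t\bigl(|\lambda|^2 - \operatorname{Re}\lambda\bigr) x^* B x \,+\, t^2 |\lambda - 1|^2\, x^* B^2 x,
\end{equation*}
which is at least $\|x\|^2$ when $|\lambda| \geqslant 1$ (use $\operatorname{Re}\lambda \leqslant |\lambda| \leqslant |\lambda|^2$, and that the remaining summands are nonnegative). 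Matching both sides squeezes $|\lambda| = 1$ together with $\lambda = 1$ or $Bx = 0$; either case yields $t(\lambda - 1)Bx = 0$, so the identity collapses to $(I - tB)Wx = \lambda x$. Thus $\lambda \in \sigma\bigl((I - tB)W\bigr) = \sigma(P(t))$, contradicting the $\varrho(P(t)) < 1$ just established.

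The main obstacle will be finding the identity for $R(t)$; it is not obvious a priori that the eigenvalue equation rewrites so cleanly in terms of $(I - tB)W$, equivalently $P(t)$. Once in hand, however, it cleanly isolates the contribution of $P(t)$ from a quadratic form that positive semidefiniteness of $B$ makes nonnegative, and the rest of the argument --- a norm expansion, a case split on whether $\lambda = 1$ or $Bx = 0$, and the reduction to $P(t)$ via $\sigma(AB) = \sigma(BA)$ --- is then routine.
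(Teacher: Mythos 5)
Your proposal is correct, and it diverges from the paper in an interesting way. The paper does not prove \Cref{thm:contractivity} directly: it cites it from the literature (where the stronger bound $\|P(t)\|_2,\|R(t)\|_2<1$ is shown) and, internally, derives it as a special case of \Cref{thm:doublystochastic}. Your $P(t)$ argument matches the paper's in substance: bound $\|W\|_2\|I-tB\|_2\leqslant 1$, use the equality case of $\|Wz\|_2\leqslant\|z\|_2$ to force the eigenvector into $\mathrm{span}(e)$ (the paper does this via \Cref{lemma:doublystochastic} with a left eigenvector; you use the spectral decomposition of the symmetric $W$), then rule out $\lambda=\pm1$ using $Be\neq 0$ and $t<2/\varrho(B)$. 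For $R(t)$ you arrive at exactly the paper's key identity $(I-tB)Wx=\bigl[\lambda I+t(\lambda-1)B\bigr]x$, but you finish differently: the paper invokes \Cref{lemma:Cmatrix} to invert $C=\lambda I+t(\lambda-1)B$ with $\|C^{-1}\|_2\leqslant 1$, forces $x\in\mathrm{span}(e)$ by a second appeal to the equality case, and then contradicts the resulting scalar equation; you instead expand $\|Cx\|_2^2$ using $B=B^\top\succeq 0$, squeeze to get $|\lambda|=1$ and ($\lambda=1$ or $Bx=0$), and reduce to $\sigma\bigl((I-tB)W\bigr)=\sigma(P(t))$, contradicting the stability of $P(t)$ already proved. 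Your finish is sound (the cross term is real because $x^*Bx$ is, and $x^*B^2x=\|Bx\|_2^2$), avoids \Cref{lemma:Cmatrix} and any further structural use of $W$ beyond $\|W\|_2\leqslant 1$, and would in fact carry over verbatim to the $R(t)$ half of the doubly stochastic \Cref{thm:doublystochastic}; what the paper's route buys is an $R(t)$ argument that is self-contained rather than piggybacking on the $P(t)$ result, packaged so that both halves reuse the same two lemmas in the more general doubly stochastic setting.
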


In fact, a stronger result was established in~\cite{ACK2023-contractivity}—namely, it was shown that the spectral norm of $P(t)$ and $R(t)$ is strictly less than $1$, which implies stability. \Cref{thm:contractivity} relies on the fact that when $W$ and $I-tB$ are symmetric, they can be diagonalized in orthonormal bases. We cannot use this as $W$ can be nonsymmetric in the present setting. Moreover, the proof in~\cite{ACK2023-contractivity} relies on the submultiplicativity of the spectral norm, which generally fails for the spectral radius.

A natural question is whether symmetry is necessary in \Cref{thm:contractivity}. Note that if $W$ is symmetric and stochastic, it is automatically doubly stochastic.  In fact, \Cref{thm:contractivity} can be formulated under this weaker assumption.

\begin{theorem}
\label{thm:doublystochastic}
Let $W$ be doubly stochastic such that both $W^\top W$ and $W W^\top $ are irreducible, and let $B$ be positive semidefinite with $Be \neq 0$. Then $P(t)$ and $R(t)$ are stable for all $0 < t < 2/\varrho (B)$.
\end{theorem}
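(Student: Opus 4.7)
My plan is to prove both parts of the theorem by showing that neither $P(t)$ nor $R(t)$ can admit an eigenvalue with $|\lambda| \geqslant 1$ when $0 < t < 2/\varrho(B)$. The argument will rest on three structural observations: (i) Since $W$ is doubly stochastic and $W^\top W$ is irreducible and (being symmetric and positive semidefinite) primitive, its simple top eigenvalue equals $1$ with eigenvector $e$; consequently $\|W\|_2 = 1$ and $\|Wv\|_2 = \|v\|_2$ only for $v \in \mathrm{span}(e)$. (ii) For $B \succeq 0$ and $0 < t < 2/\varrho(B)$, the eigenvalues of $I - tB$ lie in $(-1, 1]$, so $\|I - tB\|_2 \leqslant 1$, and equality $\|(I-tB)u\|_2 = \|u\|_2$ for $u\neq 0$ can occur only if $u \in \ker(B)$. (iii) The hypothesis $Be \neq 0$ excludes $e$ from $\ker(B)$.

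For $P(t) = W(I - tB)$, given $P(t)v = \lambda v$ with $v \neq 0$, I would set $u := (I-tB)v$ so that $Wu = \lambda v$, yielding
\[|\lambda|\,\|v\|_2 \;=\; \|Wu\|_2 \;\leqslant\; \|u\|_2 \;\leqslant\; \|v\|_2.\]
Hence $|\lambda| \leqslant 1$, and $|\lambda| = 1$ would saturate both inequalities: $u \in \mathrm{span}(e)$ and $v \in \ker(B)$. The latter gives $u = v = ce$, and then $cBe = 0$ combined with $Be \neq 0$ forces $c = 0$, i.e., $v = 0$, a contradiction.

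For $R(t)$, I would first establish the algebraic identity $R(t) = (I+tB)^{-1}[W + tB(I-W)]$, which follows from the observation $I - [W + tB(I-W)] = (I-tB)(I-W)$. The eigenvalue equation $R(t)v = \lambda v$ then rearranges into
\[(I-tB)Wv \;=\; \lambda v - t(1-\lambda)\,Bv.\]
By submultiplicativity, $\|(I-tB)Wv\|_2^2 \leqslant \|I-tB\|_2^2 \|Wv\|_2^2 \leqslant \|v\|_2^2$. Direct expansion of the right-hand side gives
\[\|\lambda v - t(1-\lambda)Bv\|_2^2 \;=\; |\lambda|^2 \|v\|_2^2 + 2t(|\lambda|^2 - \mathrm{Re}\,\lambda)\,v^{*} B v + t^2|1-\lambda|^2 \|Bv\|_2^2,\]
and the key observation is that for $|\lambda| \geqslant 1$ each of the three terms is non-negative (since $|\lambda|^2 \geqslant |\lambda| \geqslant \mathrm{Re}\,\lambda$ in this range and $B$ is PSD), so the right-hand side is at least $|\lambda|^2 \|v\|_2^2 \geqslant \|v\|_2^2$. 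Equating norms forces $|\lambda| = 1$ together with the vanishing of each of the three summands: either $\lambda = 1$ or $v \in \ker(B)$ (so $Bv = 0$). Saturation of the left-side estimate additionally forces $v = ce$; combining $v = ce$ with $cBe = 0$ (coming from $v \in \ker(B)$, or, in the case $\lambda = 1$, from substituting $v = ce$ back into the identity) and using $Be \neq 0$ yields $c = 0$, hence $v = 0$, a contradiction.

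The principal obstacle is deriving the clean rearrangement $(I-tB)Wv = \lambda v - t(1-\lambda)Bv$ and its quadratic expansion as a sum of non-negative terms, which is what converts the hypothetical $|\lambda| \geqslant 1$ into the joint structural conclusions $v \in \mathrm{span}(e)$ and $v \in \ker(B)$, whose incompatibility with $Be \neq 0$ then closes the argument. In the symmetric setting of \Cref{thm:contractivity} this step was essentially immediate via orthogonal diagonalization, but here, with $W$ only doubly stochastic, it requires the algebraic identity above together with careful handling of the squared-norm equalities on both sides.
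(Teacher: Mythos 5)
Your proof is correct, and its skeleton matches the paper's: the bound $\|I-tB\|_2\leqslant 1$ for $0<t<2/\varrho(B)$, the fact that $\|Wv\|_2\leqslant\|v\|_2$ with equality only on $\mathrm{span}(e)$ (this is exactly the paper's \Cref{lemma:doublystochastic}, whose equality case is where irreducibility of $W^\top W$ enters), and, for $R(t)$, the same rearrangement $(I-tB)Wv=\lambda v+t(\lambda-1)Bv$ of the eigenvalue equation. The differences are in execution. For $P(t)$ you work with a right eigenvector and close the equality case through $\ker B$ (via $u=(I-tB)v=v$), whereas the paper uses a left eigenvector—hence its use of the hypothesis that $WW^\top$ is irreducible—and finishes by reading off $(1-\lambda)/t\in\sigma(B)$; in fact your argument, for both $P(t)$ and $R(t)$, only ever invokes irreducibility of $W^\top W$, so under your route the assumption on $WW^\top$ is not needed, a mild sharpening. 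For $R(t)$, where the paper proves a separate lemma (\Cref{lemma:Cmatrix}) that $C=\lambda I+t(\lambda-1)B$ is invertible with $\|C^{-1}\|_2\leqslant 1$ by inspecting $\sigma(C)$, you instead expand $\|\lambda v-t(1-\lambda)Bv\|_2^2$ into three nonnegative terms; this yields $|\lambda|=1$, $v\in\mathrm{span}(e)$, and (unless $\lambda=1$) $Bv=0$ in one stroke, avoiding any discussion of invertibility of $C$, at the cost of losing the reusable bound \eqref{eq:comeslater}, which the paper recycles in the proof of \Cref{prop:inpainting}. One cosmetic remark: your identity $R(t)=(I+tB)^{-1}\bigl[W+tB(I-W)\bigr]$ is correct, but it follows directly by left-multiplying the definition of $R(t)$ by $I+tB$; the auxiliary observation you cite is not actually needed.
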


Note that~\Cref{thm:contractivity} follows from~\Cref{thm:doublystochastic}. Indeed, if $W$ satisfies the assumptions in~\Cref{thm:contractivity}, then $W$ is doubly stochastic, and $W^\top W = WW^\top= W^2$ is primitive (and hence irreducible) since $W$ is primitive. These are exactly the conditions in \Cref{thm:doublystochastic}.

\Cref{thm:existence,thm:doublystochastic} show that the symmetry of $W$ assumed in \Cref{thm:contractivity} is not necessary for stability. Henceforth, we assume that $W$ is stochastic but nonsymmetric. The question is whether a corresponding estimate for $T$ can be derived in this setting, analogous to the symmetric case. We present some special cases where such an estimate can be obtained. Moreover, the bound coincides with that for the symmetric case \Cref{thm:contractivity}. One such case arises in the image reconstruction problem of \emph{inpainting}, where $B$ is diagonal (see \Cref{sec:application}).

\begin{proposition}
\label{prop:inpainting}
Let $W$ be irreducible and stochastic,  and let $B \geqslant 0$ be nonzero and diagonal. Then $P(t)$ and $R(t)$ are stable for all $0 < t < 2/\varrho (B)$.
\end{proposition}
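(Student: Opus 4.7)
The plan is to argue by contradiction using a maximum-modulus argument on the eigenvector, closed off by the irreducibility of $W$. Write $b_i := B_{ii}$ and split the index set as $J_0 = \{i : b_i = 0\}$ and $J_+ = \{i : b_i > 0\}$; so $J_+ \neq \emptyset$ because $B \neq 0$. For $0 < t < 2/\varrho(B) = 2/\max_i b_i$ one has $|1 - tb_i| < 1$ and $E_i := 1/(1 + tb_i) > 1/3$ on $J_+$, while $|1 - tb_i| = 1$ and $E_i = 1$ on $J_0$.

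For $P(t) = W(I - tB)$, I would suppose $P(t)v = \lambda v$ with $|\lambda| \geq 1$, set $M := \|v\|_\infty$, and fix any index $i$ with $|v_i| = M$. Applying the triangle inequality to $\lambda v_i = \sum_j W_{ij}(1-tb_j) v_j$ yields the chain
\[
M \leq |\lambda| M \leq \sum_j W_{ij}|1 - tb_j|\,|v_j| \leq M \sum_j W_{ij}|1 - tb_j| \leq M,
\]
which must therefore be an equality throughout. The outermost equality forces $W_{ij} = 0$ for every $j \in J_+$, and the middle one, combined with $|1 - tb_j| = 1$ on $J_0$, forces $|v_j| = M$ for every $j \in J_0$ with $W_{ij} > 0$. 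In other words, the $W$-image of any maximizer lies in $J_0$ and consists of maximizers. Iterating, every $W^m$-successor ($m \geq 1$) of a fixed maximizer lies in $J_0$; by the irreducibility of $W$ this exhausts $\{1,\ldots,n\}$, forcing $B = 0$, a contradiction.

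For $R(t)$, the diagonal form $(I+tB)^{-1} = \diag(E_i)$ gives row-by-row the identity $(R(t)v)_i = (1 - E_i)v_i + (2E_i - 1)(Wv)_i$. At a maximizer $i^*$ the triangle inequality gives $|\lambda| \leq (1 - E_{i^*}) + |2E_{i^*} - 1|$, which equals $E_{i^*}$ if $E_{i^*} \geq 1/2$ and $2 - 3E_{i^*}$ if $E_{i^*} < 1/2$. In either regime the bound is strictly less than $1$ exactly when $1/3 < E_{i^*} < 1$, i.e.\ when $i^* \in J_+$; the constraint $t < 2/\varrho(B)$ is used precisely to guarantee $E_{i^*} > 1/3$. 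Hence no maximizer lies in $J_+$, and at each maximizer the eigenvalue equation collapses to $\lambda v_{i^*} = (Wv)_{i^*}$, which is exactly the form handled in the $P(t)$ step. Replaying that argument closes the contradiction.

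The main obstacle I anticipate is the irreducibility step: the set of maximizers is not itself forced into $J_0$, only its one-step $W$-image is, and one must notice that this weaker forward-invariance is already enough for irreducibility (rather than primitivity) to propagate the constraint to every index. This observation is precisely what allows the proposition to be stated for merely irreducible $W$, matching the bound $2/\varrho(B)$ from \Cref{thm:contractivity} without any symmetry assumption.
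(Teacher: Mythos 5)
Your proof is correct, but it follows a genuinely different route from the paper. The paper reduces both cases to a single structural statement: writing the eigen-equation as $v = WDv$ (for $P(t)$) or $v = DWv$ (for $R(t)$) with a diagonal $D$ satisfying $|D| \leqslant I$, it invokes a lemma (via the Perron--Frobenius fact that $p \leqslant Wp$ forces $p = Wp$, $p > 0$, together with the left Perron vector $\pi$) to conclude $|D| = I$, and then gets a contradiction from any index with $b_i > 0$; for $R(t)$ this requires first rearranging the eigen-equation through the matrix $C = \lambda I + t(\lambda-1)B$ and the bound $\|C^{-1}\|_2 \leqslant 1$ of \Cref{lemma:Cmatrix}. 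You instead run a coordinatewise maximum-modulus argument at an index of maximal $|v_i|$ and propagate the equality conditions along the directed graph of $W$, which is exactly where irreducibility enters; for $R(t)$ you avoid the $C$-matrix rearrangement entirely by working row-by-row with $E_i = 1/(1+tb_i)$ and the case split $E_{i^*} \geqslant 1/2$ versus $E_{i^*} < 1/2$, where the hypothesis $t < 2/\varrho(B)$ enters as $E_{i^*} > 1/3$. Your version is more elementary and self-contained (no Perron eigenvector, no comparison lemma from nonnegative matrix theory, no auxiliary matrix $C$), while the paper's version is more modular: its \Cref{lemma:inpainting} handles $P(t)$ and $R(t)$ uniformly, and \Cref{lemma:Cmatrix} is shared with the proof of \Cref{thm:doublystochastic}. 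One point worth stating explicitly if you write this up: the forward-invariance you exploit (successors of maximizers are maximizers lying in $J_0$) combined with irreducibility covers every index because every index is reachable in at least one step from a fixed maximizer, which is all the argument needs; you flag this yourself, and it is handled correctly.
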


The proof relies on linear algebra techniques, as estimating $T$ using just perturbation theory is difficult. We also note that in \Cref{prop:inpainting}, it is sufficient for $W$ to be irreducible, which is a weaker condition than primitivity.

\begin{remark}
Under the assumptions of~\Cref{prop:inpainting}, we have  $\varrho(W)=1$ and $\varrho(I-tB) \leqslant 1$. Moreover, since $I-tB$ is diagonal, the spectral radius is submultiplicative:~$\varrho(P(t)) \leqslant \varrho(W) \, \varrho(I-tB) \leqslant 1$.
A similar argument applies to $R(t)$. Therefore, the nontrivial part of~\Cref{prop:inpainting} is to show that the bound is strictly less than $1$.
\end{remark}

Next, we consider the case where $B$ is not diagonal. This setting includes the \emph{deblurring} problem as a special case (see~\Cref{sec:application}).

\begin{proposition}
\label{prop:nondiagB}
Let $W$ be primitive and stochastic, and let $B = \alpha I + \beta E$ be positive semidefinite with $Be \neq 0$, where $E \in \bbR^{n \times n}$ is the all-ones matrix. Then $P(t)$ and $R(t)$ are stable for all $0 < t < 2/\varrho (B)$.
\end{proposition}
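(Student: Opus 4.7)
The plan is to exhibit each of $P(t)$ and $R(t)$ as a rank-one perturbation of a matrix polynomial in $W$ that already has $e$ as an eigenvector, and then read off the spectrum via Brauer's theorem. Since $We = e$ implies $WE = ee^\top = E$, we immediately get
\[
P(t) = W\bigl((1 - t\alpha) I - t\beta E\bigr) = (1 - t\alpha)\, W + e\,(-t\beta\, e)^\top,
\]
a rank-one update of $(1-t\alpha) W$ along the eigenvector $e$. For $R(t)$ I would first invert $I + tB = \eta_2 I + t\beta E$ inside the commutative algebra spanned by $I$ and $E$: using $E^2 = nE$, one checks that $(I+tB)^{-1} = c_1 I + c_2 E$ with $c_1 = 1/\eta_2$ and $c_2 = -t\beta/(\eta_1\eta_2)$, where $\eta_1 := 1 + t(\alpha+n\beta)$ and $\eta_2 := 1 + t\alpha$. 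Expanding~\eqref{def:R} and using $EW = e(e^\top W)$ then rearranges to
\[
R(t) = \bigl[(1-c_1) I + (2c_1 - 1) W\bigr] + e\bigl(c_2(2 W^\top e - e)\bigr)^\top,
\]
which again has rank-one-plus-polynomial form, with $e$ a right eigenvector of the bracketed matrix for the eigenvalue $c_1 = 1/\eta_2$.

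Next I would invoke Brauer's theorem: if $A$ has a simple eigenvalue $\eta$ with right eigenvector $x$ and remaining spectrum $\{\lambda_2,\ldots,\lambda_n\}$, then $A + x y^\top$ has spectrum $\{\eta + y^\top x,\, \lambda_2,\ldots,\lambda_n\}$. Primitivity of $W$ makes $1$ a simple eigenvalue with right eigenvector $e$, so Brauer's theorem applies to both displays above and yields, after routine simplification,
\[
\sigma\bigl(P(t)\bigr) = \{\mu_1\} \cup \bigl\{(1-t\alpha)\mu : \mu \in \sigma(W)\setminus\{1\}\bigr\}, \qquad \mu_1 := 1 - t(\alpha + n\beta),
\]
and
\[
\sigma\bigl(R(t)\bigr) = \{1/\eta_1\} \cup \left\{\frac{t\alpha + (1-t\alpha)\mu}{\eta_2} : \mu \in \sigma(W)\setminus\{1\}\right\}.
\]

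The final step is to bound each listed eigenvalue by $1$ in modulus under $0 < t < 2/\varrho(B)$. Positive semidefiniteness together with $Be \neq 0$ forces $\alpha \geq 0$ and $\alpha + n\beta > 0$, so $\varrho(B) = \max(\alpha, \alpha+n\beta)$ and the hypothesis gives both $t\alpha < 2$ and $t(\alpha + n\beta) < 2$. These immediately imply $|\mu_1| < 1$, $\eta_1 > 1$ (so $|1/\eta_1| < 1$), and $|1-t\alpha| \leq 1$; combined with $|\mu| < 1$ for $\mu \in \sigma(W) \setminus \{1\}$ from primitivity, the $P(t)$-eigenvalues satisfy $|(1-t\alpha)\mu| < 1$. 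For the $R(t)$-eigenvalues I would use the triangle inequality, splitting on the sign of $1-t\alpha$, to obtain $|t\alpha + (1-t\alpha)\mu| < t\alpha + |1-t\alpha|$, which equals $1$ or $2t\alpha - 1$ according as $t\alpha \leq 1$ or $t\alpha > 1$; both are strictly less than $\eta_2 = 1 + t\alpha$ under our hypothesis. The main obstacle is really the preparatory algebra --- inverting $I + tB$ and recognising the rank-one structure in $R(t)$; once this is done, Brauer's theorem together with elementary bounds finishes both parts uniformly.
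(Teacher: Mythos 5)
Your proposal is correct, and it reaches the same eigenvalue formulas as the paper by a genuinely different route. The paper argues with left eigenvectors: it right-multiplies the eigen-equation $\lambda u^* = u^*P(t)$ (resp.\ $\lambda u^* = u^*R(t)$) by $e$, which splits the analysis into the case $\lambda = 1-t(\alpha+n\beta)$ (resp.\ $\lambda = 1/(1+t(\alpha+n\beta))$) and the case $u^*e=0$, in which the remaining eigenvalues are shown to be affine images of $\sigma(W)\setminus\{1\}$; the final modulus bounds are then identical to yours. You instead exploit $WE=E$ (and the closure of $\mathrm{span}\{I,E\}$ under inversion) to exhibit $P(t)$ and $R(t)$ as rank-one updates along $e$ of matrices of the form $aI+bW$, and read off the \emph{entire} spectrum at once via Brauer's theorem. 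Your route is arguably more transparent — it gives the exact spectrum rather than a case analysis — at the price of importing Brauer's theorem, whereas the paper stays within the Perron--Frobenius facts it already uses. Two small points to tidy up: (i) your statement of Brauer's theorem needlessly assumes the eigenvalue is simple; at the isolated parameter value $t\alpha=1$ the relevant eigenvalue of $(1-t\alpha)W$, and of $(1-c_1)I+(2c_1-1)W$, is \emph{not} simple, so you should invoke the standard form of Brauer's theorem (which requires only that $e$ be an eigenvector, with no multiplicity hypothesis) or dispose of that single value of $t$ separately. (ii) In the final bound for $R(t)$, the strict inequality $|t\alpha+(1-t\alpha)\mu| < t\alpha+|1-t\alpha|$ degenerates to equality when $t\alpha=1$, and the subsequent comparison with $\eta_2$ is not strict when $\alpha=0$; in each case the other inequality in your chain supplies the needed strictness (exactly as in the paper's own chain), so the conclusion $|\lambda|<1$ stands, but the bookkeeping should say which inequality is strict in which regime.
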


We see from the above results that nonnegativity of $B$ is not essential; instead, the key property appears to be the positive definiteness of $B$. In particular, drawing on \Cref{thm:existence,thm:contractivity,thm:doublystochastic} and \Cref{prop:inpainting,prop:nondiagB}, together with extensive numerical experiments (not reported here), we propose the following conjecture.

\begin{conjecture}
\label{conjecture}
Let $W$ be primitive and stochastic, and let $B$ be positive semidefinite with $Be \leqslant \varrho(B) e$ and $\pi^\top Be > 0$, where $\pi$ is the left Perron eigenvector of $W$. Then $P(t)$ and $R(t)$ are stable for all $0 < t < 2/\varrho (B)$.
\end{conjecture}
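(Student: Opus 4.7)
The plan is to extend the continuity argument behind \Cref{thm:existence} so as to reach the sharper threshold $T = 2/\varrho(B)$, proceeding by contradiction. If the claim failed for $P(t)$, then since $\varrho(P(t)) < 1$ for sufficiently small $t > 0$ by \Cref{thm:existence} and $\varrho(P(\cdot))$ is continuous, there would exist a smallest $t^\star \in (0, 2/\varrho(B))$ at which some eigenvalue $\lambda$ of $P(t^\star)$ has $|\lambda| = 1$. Fixing such a $\lambda$ and a nonzero eigenvector $v$ with $W(I - t^\star B) v = \lambda v$, the task reduces to deriving a contradiction from the hypotheses. The argument for $R(t)$ would proceed in parallel starting from the identity $(I + tB)R(t) = W + tB(I - W)$, which turns $R(t^\star) v = \lambda v$ into $(\lambda I - W)v = -t^\star B\bigl(Wv - (1-\lambda) v\bigr)$.

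The main tool would be the Perron-weighted inner product $\langle x, y \rangle_D = x^* D y$ with $D = \diag(\pi)$. A standard calculation using $\pi^\top W = \pi^\top$ and Jensen's inequality shows that $\|Wx\|_D \leq \|x\|_D$ for every $x$, with equality forcing $x \propto e$ when $W$ is primitive. Setting $u := (I - t^\star B)v$ and using $Wu = \lambda v$ with $|\lambda| = 1$ gives $\|v\|_D = \|Wu\|_D \leq \|u\|_D$, which after squaring, expanding, and dividing by $t^\star > 0$ yields the necessary condition
\begin{equation*}
t^\star\, v^* B D B v \;\geq\; v^* (BD + DB) v \;=\; 2\,\mathrm{Re}\bigl(v^* BD v\bigr).
\end{equation*}
Writing $y = D^{1/2} v$ and $M = D^{1/2} B D^{-1/2}$ (so that $M$ is similar to $B$ and has nonnegative real eigenvalues bounded by $\varrho(B)$), this rewrites as $t^\star \|My\|^2 \geq 2\,\mathrm{Re}(y^* M y)$. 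Two limiting cases close at once: if $My = 0$ then $Bv = 0$ and $Wv = \lambda v$ forces $v \propto e$ by primitivity, so $Be = 0$, contradicting $\pi^\top Be > 0$; and if $y$ is a right eigenvector of $M$ with eigenvalue $\mu > 0$, the inequality collapses to $t^\star \mu \geq 2$, contradicting $t^\star < 2/\varrho(B) \leq 2/\mu$.

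For the remaining cases I would exploit the Perron decomposition $v = \alpha e + w$ with $\alpha = \pi^\top v$ and $w \in V := \{z : \pi^\top z = 0\}$, and project the eigenvalue equation onto $\mathrm{span}(e)$ and $V$ via $\Pi := I - e\pi^\top$. This yields the coupled system
\begin{equation*}
\alpha\bigl(1 - \lambda - t^\star\, \pi^\top B e\bigr) = t^\star\, \pi^\top B w, \qquad (\lambda I - W)\, w = -t^\star\, (W - e\pi^\top)\, Bv,
\end{equation*}
in which $W - e\pi^\top$ has spectral radius strictly less than $1$ by primitivity, so $(\lambda I - W)\vert_V$ is invertible for every $|\lambda| = 1$ and delivers a bound of the form $\|w\|_D \leq C(\lambda)\, t^\star \|Bv\|_D$. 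The goal is to combine this resolvent estimate with the row-sum constraint $Be \leq \varrho(B) e$ (which controls $\pi^\top B w$ through $\alpha$) and with the scalar equation in $\alpha$ to extract the sharp inequality $t^\star \varrho(B) \geq 2$, the desired contradiction.

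The main obstacle is precisely this quantitative closure. Because $M$ is generally non-normal, the inequality $t^\star \|My\|^2 \geq 2\,\mathrm{Re}(y^* M y)$ is \emph{not} by itself restrictive enough: it admits solutions $y$ that are neither in the kernel of $M$ nor eigenvectors of $M$ (this is already visible on a $2\times 2$ Jordan-type example). Closing the gap therefore requires exploiting the full coupled system above, and in particular obtaining uniform resolvent control on $(\lambda I - W)\vert_V$ as $\lambda$ ranges over the unit circle together with a sharp use of the compatibility $Be \leq \varrho(B) e$. The special cases settled in \Cref{thm:contractivity}, \Cref{thm:doublystochastic}, \Cref{prop:inpainting}, and \Cref{prop:nondiagB} indicate that this obstruction is technical rather than fundamental, but bridging it in full generality is the central difficulty and the reason the statement is stated only as a conjecture.
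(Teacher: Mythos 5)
There is no proof of this statement in the paper to compare against: \Cref{conjecture} is stated only as a conjecture, supported by the special cases in \Cref{thm:contractivity,thm:doublystochastic} and \Cref{prop:inpainting,prop:nondiagB} and by numerical evidence. Your proposal, by your own admission, does not settle it either, and the place where it stops is exactly the content of the conjecture. The preparatory steps are sound: the identity $(I+tB)R(t)=W+tB(I-W)$, the Perron-weighted contraction $\|Wx\|_D\leqslant\|x\|_D$ with $D=\diag(\pi)$, the necessary condition $t^\star\|My\|^2\geqslant 2\,\mathrm{Re}(y^*My)$ with $M=D^{1/2}BD^{-1/2}$, the disposal of the cases $My=0$ and $y$ an eigenvector, and the projected system obtained via $\Pi=I-e\pi^\top$ (whose invertibility of $(\lambda I-W)$ on $\{\pi^\top z=0\}$ for $|\lambda|=1$ does follow from primitivity). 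But the decisive step---extracting $t^\star\varrho(B)\geqslant 2$ from the coupled system together with $Be\leqslant\varrho(B)e$---is missing, and nothing in the sketch ties the resolvent constant $C(\lambda)$ to $\varrho(B)$ in a way that could produce the sharp threshold $2/\varrho(B)$. Note also that any closure must use $Be\leqslant\varrho(B)e$ essentially, since \Cref{ex:counterexample2} shows the conclusion fails without it, and \Cref{rmk:counterexample1} shows $\pi^\top Be>0$ is likewise indispensable; your plan only gestures at where these hypotheses would enter.

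One intermediate claim is actually false and worth flagging: equality in $\|Wx\|_D\leqslant\|x\|_D$ does \emph{not} force $x\propto e$ for primitive $W$. Equality holds precisely when $x$ is constant on the support of each row of $W$, and a primitive chain with deterministic transitions violates your claim: take $n=4$ with rows $1\to 2$, $3\to 1$, $4\to 2$ deterministic and row $2$ uniform on $\{3,4\}$; then $W$ is primitive, yet $x=(1,0,0,0)^\top$ satisfies $\|Wx\|_D=\|x\|_D$. This is exactly why \Cref{thm:doublystochastic} assumes irreducibility of $W^\top W$ and $WW^\top$ rather than mere primitivity of $W$, and it is a structural reason the contraction argument of the doubly stochastic case does not transfer to general primitive, nonsymmetric $W$: the $D$-weighted operator norm of $W$ restricted to the complement of the constants can equal $1$, so the strictness needed for stability cannot be obtained from the norm inequality alone. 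In the steps you actually execute this equality claim is not load-bearing, but it would block the most natural way of finishing along these lines, and repairing it (or finding a substitute for the norm argument) is part of the open problem rather than a technicality.
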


The assumptions in \Cref{conjecture} guarantee that $\varrho(W), \varrho(I - tB) \leqslant 1$. Yet, this by itself does not ensure $\varrho(P(t)) \leqslant 1$. We are conjecturing a stronger property that not only is $\varrho(P(t)) \leqslant 1$, but in fact $P(t)$ is stable.

\begin{example}
\label{ex:counterexample2}
We give an example showing that the condition $Be \leqslant \varrho(B)e$ is necessary in~\Cref{conjecture}, even if all other assumptions hold. Let
\begin{equation}
\label{eq:counterexample2}
W = \frac{1}{2}\begin{pmatrix}
    0 & 2 \\
    1 & 1
\end{pmatrix}
\,, \quad
H = \frac{1}{100}\begin{pmatrix}
    48 & 52 \\
    52 & 48
\end{pmatrix}\,,\quad 
S = \begin{pmatrix}
    1 & 0
\end{pmatrix}\,,\quad 
B = (SH)^\top (SH).
\end{equation}
The matrix $W$ is primitive and stochastic with left Perron vector $\pi^\top = (1/2 \quad  1)$. The choice of $B$  corresponds to a practical application discussed in \Cref{sec:application}. In this case, $B$ is positive semidefinite with $\pi^\top Be  > 0$, but the condition $Be \leqslant \varrho(B)e$ is not satisfied. The bound is $2/\varrho(B)=3.9936...$, yet we can verify that $P(t)$ is not stable for $3.86 < t < 2/\varrho(B)$.
\end{example}

Unlike $P(t)$, predicting a strict stability threshold $T$ for $R(t)$ is hard. We explain this with a numerical example.

\begin{example}
\label{ex:admm}
Let $W$ be as in \eqref{eq:example_W_B_blur}, and consider two different choices of $B$:
\begin{equation}
\label{eq:exampleADMM}
B_1 = \frac{1}{10}
\begin{pmatrix}
   30 & 0 \\ 0 & 5
\end{pmatrix}\quad \mbox{and} \quad 
B_2 = \frac{1}{10}\begin{pmatrix}
   4 & -1 \\ -1 & 2
\end{pmatrix},
\end{equation}
where $B_1$ is diagonal and $B_2$ has positive and negative components. Both satisfy the assumptions in~\Cref{conjecture}. The spectral radius of $R(t)$ is shown in~\Cref{fig:admm}. For $B_1$, stability holds for $0 < t < T_1$ with $T_1 = 4.777...$, whereas for $B_2$, it extends to $0 < t < T_2$ with $T_2 = 11.904...$. In both cases, the stability threshold is strictly larger than the bounds $2/\varrho(B_1) = 0.67...$ and $2/\varrho(B_2)=4.5308...$. Moreover, the values of $T_1$ and $T_2$ differ significantly. This shows that the stability threshold depends on $B$ and that $2/\varrho(B)$ underestimates the maximal threshold $T$.
\end{example}

\begin{figure}[t]
\centering
\begin{subfigure}[t]{0.47\textwidth}
   \centering
   \includegraphics[width=1.0\linewidth]{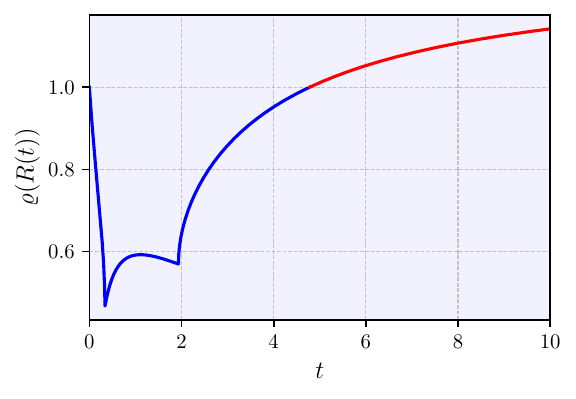}
   \caption{For $B = B_1$ in \eqref{eq:exampleADMM}.}
\end{subfigure}
\hfill
\begin{subfigure}[t]{0.47\textwidth}
   \centering
   \includegraphics[width=1.0\linewidth]{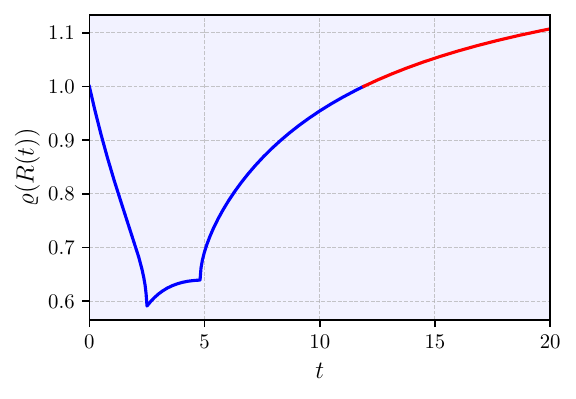}
   \caption{For $B = B_2$ in \eqref{eq:exampleADMM}.}
\end{subfigure}
\caption{Spectral radius of $R(t)$ in~\Cref{ex:admm} for different choices of $B$. Blue indicates the stable regime ($\varrho < 1$), and red indicates instability ($\varrho \geqslant 1$). We observe that the stability threshold is sensitive to the choice of $B$, and that the bound $2/\varrho(B)$ does not coincide with the maximal threshold (see \Cref{ex:admm} for details).}
\label{fig:admm}
\end{figure}

\section{Application}
\label{sec:application}

We now apply the stability results from the previous section. As observed in~\cite{ACK2023-contractivity}, these results provide convergence guarantees for certain iterative algorithms in image reconstruction. For completeness and to place the stability results in a broader context, we briefly outline the main ideas.

The operators~\eqref{def:P} and~\eqref{def:R} come from the plug-and-play method, which uses smoothing operators to regularize ill-posed imaging problems~\cite{sreehari2016plug}. The plug-and-play framework itself is grounded in convex optimization algorithms, such as proximal gradient descent (PGD) and the alternating direction method of multipliers (ADMM)~\cite{Beck2017}. The origins of these algorithms can, in turn, be traced to operator splitting methods~\cite{GabayMercier1976,LionsMercier1979} and, further still, to the numerical solution of partial differential equations~\cite{DouglasRachford1956,PeacemanRachford1955}. 

The operators $P(t)$ and $R(t)$ are derived from PGD and  ADMM in the setting of linear inverse problems~\cite{bouman2022foundations}, namely problems of the form $Ax=b$, where $x \in \bbR^n$ is the reconstructed image, $A \in \bbR^{m \times n}$ is the forward operator (or imaging model), and $b \in \bbR^{m}$ is the observed image. Since $b$ is typically noisy and $A$ is ill-conditioned, the inversion is performed by solving the regression problem
\begin{equation}
\label{eq:loss}
\min_{x \in \bbR^n} \: J(x) = \frac{1}{2} \|Ax-b\|^2,
\end{equation}
where $\| \cdot \|$ is the Euclidean norm on $\bbR^m$ and $J: \bbR^n \to \bbR$ is a quadratic loss function. 

\begin{remark}[properties of $A$]
\label{rmk:propertiesA}
Depending on the application, the forward operator $A$ takes different forms~\cite{bouman2022foundations}. In inpainting, $A$ is a diagonal matrix with components in $\{0,1\}$; in deblurring, $A=H$ is a circulant stochastic matrix; and in superresolution, $A=SH$, where $H$ is as in deblurring and $S \in \bbR^{m \times n}$ is a subsampling matrix formed by selecting $m$ rows from the identity matrix $I \in \bbR^{n \times n}$. In particular, $A$ may be assumed to be nonzero in these applications.
\end{remark}

The regression problem~\eqref{eq:loss} can be solved using gradient descent, with iterations given by
\begin{equation}
\label{eq:GD}
x^{(0)} \in \bbR^n, \quad x^{(k+1)} =  x^{(k)} - t\, \nabla \! J(x^{(k)}), \ k \geqslant 0,
\end{equation}
where $t > 0$ is the step size. Substituting the loss function from~\eqref{eq:loss} yields
\begin{equation*}
 x^{(k+1)}  = (I-tB)\,  x^{(k)} + t  A^\top b \qquad (B:=A^\top \! A).
\end{equation*}

\begin{remark}[properties of $B$]
\label{rmk:propertiesB}
The matrix $B=A^\top \! A$ is always positive semidefinite. For inpainting, deblurring, and superresolution, it follows from~\Cref{rmk:propertiesA} that $A \geqslant 0$, and consequently so is $B$. Furthermore, $Be \neq 0$, since $Be= 0$ would imply $Ae = 0$, which is not possible as the measurement of a constant-intensity image (represented by $e$) cannot be zero. Thus, $B$ satisfies the assumptions in the results in~\Cref{sec:intro}.
\end{remark}

It is well known that the reconstruction obtained through linear regression is typically of poor quality. Thus, some form of \emph{regularization} is required to obtain high-quality images. The plug-and-play framework achieves this by applying a denoiser after each gradient step. Specifically, if a linear denoiser $W$ is used, the iterations~\eqref{eq:GD} take the form
\begin{equation}
\label{eq:PnP}
 x^{(k+1)}  = W \left( x^{(k)} - t \,\nabla \! J(x^{(k)}) \right) = P(t)\,  x^{(k)} + t W A^\top b,
\end{equation}
where $P(t)$ is as defined in~\eqref{def:P}. A well-known result is that the sequence $\{x^{(k)}\}$ generated using~\eqref{eq:PnP} converges if $P(t)$ is stable~\cite{meyer2000matrix}. This explains the origin of \Cref{prob:existence} and its connection to the stability considerations in \Cref{sec:intro}. It was demonstrated in \cite{sreehari2016plug} that excellent results are obtained if the linear denoiser $W$ is modeled on the non-local means filter~\cite{buades2005review}. We briefly remark on the construction of $W$ and its mathematical properties, which motivate the specific assumptions on $W$ in~\Cref{sec:intro}. 

\begin{remark}[kernel denoiser]
\label{rmk:constructionW}
The linear denoiser $W$ is constructed from a kernel matrix $K \in \bbR^{n \times n}$, where $K_{ij}$ quantifies the proximity between pixels $i$ and $j$. The precise form of $K$ is not essential in the present context; we only require that $K$ is nonnegative, primitive, and positive semidefinite. The denoiser $W$ is obtained by row-normalizing $K$, producing a stochastic, primitive matrix. However, the normalization typically makes $W$ nonsymmetric. We refer to this construction as a kernel denoiser, which falls within the class of $W$ considered in~\Cref{sec:intro}.
\end{remark}

A procedure for constructing a symmetric kernel denoiser from $K$ was proposed in~\cite{sreehari2016plug}, resulting in a matrix $W$ that is symmetric, stochastic, and positive semidefinite. In this case, $W$ can be interpreted as the proximal operator of a convex potential~\cite{moreau1965proximite}, and the convergence of~\eqref{eq:PnP} then follows directly from standard optimization theory~\cite{Beck2017}. However, this argument does not yield a convergence rate. It was later shown in~\cite{ACK2023-contractivity} that linear convergence can be established since the operator $P(t)$ in~\eqref{eq:PnP} is contractive in this setting. 

To our knowledge, establishing convergence for a nonsymmetric $W$ is an open problem. This also has computational implications, as constructing the symmetric form of $W$ in~\cite{sreehari2016plug} is more expensive, even though it delivers comparable reconstruction quality. 
Importantly, 
such symmetrizations can result in worse reconstructions in some cases. 
These considerations motivated us to revisit the problem of establishing convergence without requiring $W$ to be symmetric.

\vspace{0.5em}

Building on the above discussion, we state the convergence results for the plug-and-play iterations in~\eqref{eq:PnP}.

\begin{corollary}
\label{corr:convergencePnP}
Suppose $W$ is a kernel denoiser and $A$ corresponds to inpainting, deblurring, or superresolution. Then there exists some $T>0$ such that the iterations generated by \eqref{eq:PnP} converge for any step size $ 0 < t < T$.
\end{corollary}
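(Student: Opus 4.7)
The plan is to reduce the corollary to a direct application of \Cref{corr2:existence} combined with the classical convergence criterion for affine iterations. First, I would verify that the hypotheses of \Cref{corr2:existence} are met. By \Cref{rmk:constructionW}, a kernel denoiser $W$ is stochastic and primitive by construction. From \Cref{rmk:propertiesA}, in each of the three applications (inpainting, deblurring, superresolution) the forward operator $A$ is nonnegative, hence $B := A^\top\! A \geqslant 0$. Moreover, \Cref{rmk:propertiesB} rules out $B = 0$: if $B$ vanished then $Ae = 0$, contradicting the fact that the measurement of a constant-intensity image cannot be identically zero. Thus $W$ and $B$ satisfy the hypotheses of \Cref{corr2:existence}.

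Having checked the assumptions, I would invoke \Cref{corr2:existence} to obtain a threshold $T>0$ such that $P(t)$ is Schur stable, i.e., $\varrho(P(t)) < 1$, for every $0 < t < T$. The second step is to translate this spectral bound into convergence of the affine iteration \eqref{eq:PnP}. Writing $c := t W A^\top b$, the recursion is $x^{(k+1)} = P(t)\, x^{(k)} + c$. Since $\varrho(P(t)) < 1$, the matrix $I - P(t)$ is invertible, so the fixed-point equation $x^\star = P(t)\, x^\star + c$ has the unique solution $x^\star = (I - P(t))^{-1} c$. Subtracting yields $x^{(k)} - x^\star = P(t)^k \bigl(x^{(0)} - x^\star\bigr)$, and by Gelfand's formula (or the Jordan normal form) $P(t)^k \to 0$ whenever $\varrho(P(t)) < 1$. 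Hence $x^{(k)} \to x^\star$ geometrically, for any initialization $x^{(0)}$.

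There is no real obstacle at this stage: the heavy lifting has already been done in establishing \Cref{thm:existence} and \Cref{corr2:existence}. The only point requiring care is matching each of the three application-specific forward operators in \Cref{rmk:propertiesA} to the abstract conditions $B \geqslant 0$ and $B \neq 0$, which is immediate from the structural description of $A$ in each case. The analogous statement for the ADMM-type iteration governed by $R(t)$ would follow in exactly the same way, using stability of $R(t)$ from the same corollary.
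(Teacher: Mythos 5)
Your proposal is correct and follows essentially the same route as the paper: the paper's proof is exactly to verify the hypotheses via \Cref{rmk:propertiesB,rmk:constructionW} and invoke the existence corollaries (the paper cites \Cref{corr1:existence}, using $Be\geqslant 0$, $Be\neq 0$, while you cite the equivalent \Cref{corr2:existence} with $B\geqslant 0$, $B\neq 0$ — both are immediate consequences of \Cref{thm:existence} and both apply here). Your explicit fixed-point argument for the affine recursion merely spells out the standard result the paper references for why $\varrho(P(t))<1$ implies convergence, so there is no substantive difference.
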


\begin{proof}
This follows from~\Cref{corr1:existence} and the properties of $B$ and $W$ in~\Cref{rmk:propertiesB,rmk:constructionW}.
\end{proof}

\begin{corollary}
\label{corr:convergenceInp}
Suppose $W$ is a kernel denoiser and $A$ corresponds to inpainting. Then the iterations generated by \eqref{eq:PnP} converge for any step size $ 0 < t < 2$.
\end{corollary}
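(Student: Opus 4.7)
The plan is to reduce this to \Cref{prop:inpainting} by exploiting the special form of $A$ in inpainting. According to \Cref{rmk:propertiesA}, for inpainting $A$ is a diagonal matrix with entries in $\{0,1\}$. Consequently $A^\top = A$ and $A^2 = A$, so $B = A^\top A = A$ is itself diagonal, nonnegative, and nonzero (since $Be = Ae \neq 0$ by \Cref{rmk:propertiesB}). The nonzero diagonal entries of $B$ all equal $1$, hence $\varrho(B) = 1$.

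Next, I would invoke the denoiser structure from \Cref{rmk:constructionW}: the kernel denoiser $W$ is stochastic and primitive, and in particular irreducible. Thus $(W, B)$ satisfies the hypotheses of \Cref{prop:inpainting}, which yields the stability of $P(t)$ for all $0 < t < 2/\varrho(B) = 2$.

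Finally, I would translate stability into convergence of the iterates~\eqref{eq:PnP}. Writing $c := t W A^\top b$, the recursion takes the affine form $x^{(k+1)} = P(t)\, x^{(k)} + c$. Since $\varrho(P(t)) < 1$ for $0 < t < 2$, the matrix $I - P(t)$ is invertible and the standard argument for affine iterations (e.g., the result on successive approximations in~\cite{meyer2000matrix}) guarantees that $\{x^{(k)}\}$ converges to the unique fixed point $(I - P(t))^{-1} c$ for any initialization $x^{(0)}$. No step in this chain is delicate; the only substantive input is \Cref{prop:inpainting}, and the main bookkeeping is simply to verify that the inpainting $A$ makes $B = A$ so that $2/\varrho(B) = 2$.
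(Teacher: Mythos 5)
Your proposal is correct and follows essentially the same route as the paper: identify $B=A^\top A=A$ as diagonal, nonnegative, nonzero with $\varrho(B)=1$, apply \Cref{prop:inpainting} using the stochastic, primitive (hence irreducible) kernel denoiser $W$, and conclude convergence of the affine iteration from stability of $P(t)$. The only difference is that you spell out the fixed-point argument the paper delegates to the standard reference, which is fine.
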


\begin{proof}
This follows from~\Cref{rmk:propertiesB,rmk:constructionW},~\Cref{prop:inpainting}, and the fact that $\varrho(B)=1$ for inpainting~(see~\Cref{rmk:propertiesA,rmk:propertiesB}).
\end{proof}

\begin{remark}
It is easy to verify from~\Cref{rmk:propertiesA} that when $A$ corresponds to deblurring, the matrix $B=A^\top\!A$ in~\Cref{rmk:propertiesB} satisfies the assumptions $Be \leqslant \varrho(B) e$ and $\pi^\top Be > 0$ in~\Cref{conjecture}. Hence, a positive resolution of~\Cref{conjecture} would yield a convergence guarantee for deblurring for any step size $0 < t < 2/\varrho (B)$.
\end{remark}

The above ideas apply to the stability of $R(t)$ and the convergence of the associated iterative algorithm. However, exploring this further would take us away from the main topic, so we instead refer the interested reader to~\cite{ACK2023-contractivity}.

\section{Proofs}
\label{sec:proofs}

In this section, we prove \Cref{thm:existence,thm:doublystochastic} and \Cref{prop:inpainting,prop:nondiagB}. Some proofs require additional notation, concepts, and lemmas, which are introduced as needed. 

\subsection{Proof of~\Cref{thm:existence}}

\vspace{1em}

In this part, we work with complex-valued matrices and vectors and follow the notation in~\cite{greenbaum2020}. We write $v^*$ for the complex conjugate of $v \in \bbC^n$. If $\lambda \in \sigma(M)$ is simple (algebraic multiplicity $1$), then there exist $v \in \bbC^n$ (left eigenvector) and $u \in \bbC^n$ (right eigenvector) such that $v^* M = \lambda v^*, \, Mu= \lambda u,$ and $v^* u =1$.  We call $\lambda \in \sigma(M)$ \emph{dominant} if $\varrho(M)= |\lambda|$ and if $|\mu| < \varrho(M)$ for all $\mu \in \sigma(M), \, \mu \neq \lambda$. 

We denote by $D_r(0)$ the open disk $\{z \in \bbC: \ |z| < r\}$ around the origin. A map $M: D_r(0) \to \bbC^{n \times n}$ is said to be \emph{analytic} if each component $M_{ij}(z)$ is analytic (complex differentiable) on $D_r(0)$~\cite{kato1995}.

To apply perturbation theory of eigenvalues, we extend the domain of \eqref{def:P} and \eqref{def:R} by replacing the real parameter $t$ in \eqref{def:P} and \eqref{def:R} with a complex parameter $z$. Specifically, we define the maps $P: \bbC \to \bbC^{n \times n}$ and $R: \bbC \to \bbC^{n \times n}$ by (reusing the same notation for simplicity)
\begin{equation}
\label{eq:PR}
P(z) = W(I-zB),  \qquad R(z) = I-W + (I+zB)^{-1} (2W-I),
\end{equation}
where $R$ is defined in a neighborhood of the origin. We can view \eqref{def:P} and \eqref{def:R} as restrictions of \eqref{eq:PR} to the real line. 

\begin{proposition}
\label{prop:analyticPR}
The functions in~\eqref{eq:PR} are analytic around the origin, and 
\begin{equation}
\label{eq:derivative}
P'(0) = - WB, \qquad  R'(0) = - B  (2W-I).
\end{equation}
\end{proposition}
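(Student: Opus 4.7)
The plan is to treat the two functions separately, since $P(z)$ is essentially trivial and $R(z)$ requires only a standard Neumann series argument. For $P(z) = W - z\, WB$, each entry is a polynomial of degree at most one in $z$, so analyticity on all of $\mathbb{C}$ is immediate. Differentiating the matrix entrywise gives $P'(z) = -WB$ as a constant matrix-valued function, so in particular $P'(0) = -WB$.

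For $R(z)$, the key step is to establish that $z \mapsto (I+zB)^{-1}$ is analytic in a neighborhood of the origin. I would first observe that $\det(I + zB)$ is a polynomial in $z$ whose value at $z=0$ is $1$, so by continuity there is some $r > 0$ such that $I + zB$ is invertible for all $z \in D_r(0)$. For such $z$, I would invoke the Neumann series
\begin{equation*}
(I + zB)^{-1} = \sum_{k=0}^{\infty} (-z)^k B^k,
\end{equation*}
which converges entrywise whenever $|z|\, \varrho(B) < 1$. This expresses each entry of $(I+zB)^{-1}$ as a convergent power series in $z$, establishing analyticity on $D_{1/\varrho(B)}(0)$ (with the convention that this is all of $\mathbb{C}$ when $B$ is nilpotent). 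Consequently $R(z) = I - W + (I+zB)^{-1}(2W - I)$ is analytic on the same disk, being a linear combination of analytic matrix-valued functions.

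To compute $R'(0)$, I would differentiate the Neumann series term by term, which is legitimate inside the radius of convergence, to obtain
\begin{equation*}
\frac{d}{dz}(I + zB)^{-1} = \sum_{k=1}^{\infty} k(-1)^k z^{k-1} B^k,
\end{equation*}
which evaluates to $-B$ at $z = 0$. Alternatively, one may differentiate the identity $(I+zB)(I+zB)^{-1} = I$ to obtain the standard formula $\tfrac{d}{dz}(I+zB)^{-1} = -(I+zB)^{-1} B (I+zB)^{-1}$, which again yields $-B$ at the origin. Substituting into $R(z)$ gives $R'(0) = -B(2W - I)$, as claimed.

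There is no real obstacle in this proof; the only point requiring a small amount of care is justifying the existence of a disk $D_r(0)$ on which $(I+zB)^{-1}$ is well-defined, which follows immediately either from the Neumann series criterion $|z|\, \varrho(B) < 1$ or from the nonvanishing of $\det(I+zB)$ at $z=0$.
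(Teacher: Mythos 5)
Your proof is correct. For $P(z)$ you do exactly what the paper does (entries are affine in $z$, so $P'(z)=-WB$ identically). For $R(z)$ the structure is the same — everything hinges on the analyticity of $z \mapsto (I+zB)^{-1}$ near $0$ — but your primary justification differs: you expand $(I+zB)^{-1}$ as the Neumann series $\sum_{k\geqslant 0}(-z)^k B^k$, valid for $|z|\varrho(B)<1$, and differentiate term by term, whereas the paper invokes the general fact (citing Lax) that the inverse of an analytic matrix-valued function $G(z)$ is analytic wherever $G(z)$ is invertible, with $\frac{d}{dz}G(z)^{-1}=-G(z)^{-1}G'(z)G(z)^{-1}$, and evaluates this at $z=0$. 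Your route is more elementary and self-contained, and it yields an explicit disk of analyticity $D_{1/\varrho(B)}(0)$ rather than the paper's unspecified $r>0$; the paper's route generalizes immediately to non-polynomial perturbations $G(z)$ and avoids any convergence bookkeeping. Your second, alternative computation of the derivative (differentiating $(I+zB)(I+zB)^{-1}=I$) is precisely the paper's argument, so the two proofs coincide on that point.
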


\begin{proof}
The function $P(z) = W-zWB$ is analytic on the entire complex plane, with derivative $P'(z)=-WB$. On the other hand, we have
\begin{equation*}
R(z) =  I-W + G(z)^{-1} (2W-I), \qquad G(z):=I+zB.
\end{equation*}
The function $G(z)$ is analytic with $G'(z)=B$. Since $G(0)=I$ is invertible, there exists $r>0$ such that $G(z)$ is invertible on $D_r(0)$. Moreover, its inverse $G(z)^{-1}$ is analytic on $D_r(0)$ with derivative~(e.g., see~\cite[Chapter~9]{lax2007linear})
\begin{equation}
\label{eq:deriv}
\frac{d}{dz}  G(z)^{-1} = -G(z)^{-1} G'(z)\,G(z)^{-1}  = -(I+zB)^{-1} B \, (I+zB)^{-1} .
\end{equation}
It follows that $R(z)$ is differentiable on $D_r(0)$, with derivative 
\begin{equation*}
R'(z) =\frac{d}{dz}  G(z)^{-1} (2W-I).
\end{equation*}
Combining this with~\eqref{eq:deriv}, we get the formula for $R'(0)$.
\end{proof}

\begin{lemma}
\label{lemma:perturbationrho}
Let $M(z)$ be a $\bbC^{n \times n}$-valued map that is analytic in a neighborhood of the origin. Suppose $M(0)$ has a simple,  dominant eigenvalue $\lambda_0$ with left and right eigenvectors $v$ and $u$ such that $v^* u =1$. Then there exists a neighborhood of the origin on which the function $\varrho(z)=\varrho(M(z))$ is analytic, and 
\begin{equation}
\label{eq:derivrho}
\varrho'(0) = v^* M'(0) \, u.
\end{equation}
\end{lemma}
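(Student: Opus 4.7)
The plan is to construct an analytic eigenvalue branch of $M(z)$ passing through $\lambda_0$, identify the $\varrho(z)$ of the statement with this branch, and then extract its derivative by differentiating the eigenvalue equation. The cleanest way to produce the branch is via the implicit function theorem applied to the characteristic polynomial $p(z,\lambda)=\det(\lambda I - M(z))$, whose coefficients are analytic in $z$. Simplicity of $\lambda_0$ gives $p(0,\lambda_0)=0$ together with $\partial_\lambda p(0,\lambda_0)\neq 0$, which yields a unique analytic function $\lambda(z)$ near the origin with $\lambda(0)=\lambda_0$ and $\lambda(z)\in\sigma(M(z))$. Equivalently, one can build $\lambda(z)$ via the Riesz spectral projector
\begin{equation*}
\Pi(z) \,=\, \frac{1}{2\pi i}\oint_{\Gamma}(\zeta I - M(z))^{-1}\,d\zeta,
\end{equation*}
where $\Gamma$ is a small circle around $\lambda_0$ separating it from $\sigma(M(0))\setminus\{\lambda_0\}$; for $|z|$ small, $\Pi(z)$ is analytic and of rank one, $\lambda(z)=\operatorname{tr}(M(z)\Pi(z))$ is analytic, and an analytic right eigenvector $u(z)$ with $u(0)=u$ is obtained from the range of $\Pi(z)$.

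Next, since $\lambda_0$ is \emph{strictly} dominant, continuous dependence of the spectrum on the matrix entries (for instance, by applying the projector argument above to the complement of $\{\lambda_0\}$ in $\sigma(M(0))$) guarantees a possibly smaller neighborhood of the origin on which every other eigenvalue $\mu\in\sigma(M(z))$ satisfies $|\mu|<|\lambda(z)|$. On this neighborhood one may identify $\varrho(z)$ with $\lambda(z)$, producing an analytic function whose modulus equals the usual spectral radius.

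To finish, I would differentiate $M(z)u(z)=\lambda(z)u(z)$ at $z=0$,
\begin{equation*}
M'(0)\,u + M(0)\,u'(0) \,=\, \lambda'(0)\,u + \lambda_0\, u'(0),
\end{equation*}
multiply on the left by $v^*$, and use $v^* M(0)=\lambda_0 v^*$ together with $v^* u = 1$. The terms containing $u'(0)$ cancel, leaving $\lambda'(0)=v^* M'(0)\,u$, which is~\eqref{eq:derivrho}. The only step that requires genuine care is the existence of the analytic branch $\lambda(z)$ and its accompanying eigenvector, but for a \emph{simple} eigenvalue this is standard and is precisely the perturbation theory of~\cite{kato1995} already cited in the paper; the remainder of the proof is a one-line differentiation.
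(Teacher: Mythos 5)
Your proposal is correct and follows essentially the same route as the paper: produce the analytic branch $\lambda(z)$ through the simple eigenvalue $\lambda_0$, use strict dominance plus continuity of the spectrum to identify $\varrho(z)$ with $\lambda(z)$ near the origin, and read off $\lambda'(0)=v^*M'(0)u$. The only difference is that you prove the perturbation facts yourself (implicit function theorem/Riesz projector and differentiation of $M(z)u(z)=\lambda(z)u(z)$), whereas the paper simply imports them from the cited reference \cite{greenbaum2020}.
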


\begin{proof}
Since $\lambda_0$ is a simple eigenvalue of $M(0)$, there exists $r > 0$ and an analytic function $\lambda: D_r(0) \to \bbC$ such that (e.g., see~\cite[Theorem~1]{greenbaum2020})
\begin{equation}
\label{eq:derivlambda}
\lambda(0) = \lambda_0,
\quad
\lambda'(0) = v^* M'(0) u,
\end{equation}
where $u$ and $v$ are as defined in~\Cref{lemma:perturbationrho}. Moreover, we know that $\varrho(0) = \lambda_0$, where $\lambda_0$ is an isolated eigenvalue of $M(0)$. Since the eigenvalues of $M(z)$ depend continuously on the components of $M(z)$ and since $M(z)$ is continuous, the eigenvalues depend continuously on $z$. Consequently, by shrinking $r$ if necessary, we have $\varrho(z) = \lambda(z)$ for all $z \in D_r(0)$.
\end{proof}

\begin{proof}[Proof of \Cref{thm:existence}]
Note that $P(z)$ and $R(z)$ in~\eqref{eq:PR} satisfy the assumptions of~\Cref{lemma:perturbationrho}. Moreover, $P(0)=R(0)=W$, where $W$ is assumed to be primitive and stochastic. Consequently, $\lambda_0=1$ is a simple, dominant eigenvalue of both $P(0)$ and $R(0)$, with corresponding left and right eigenvectors $v=\pi$ and $u=e$ such that $\pi^\top \! e=1$.  It then follows from~\Cref{lemma:perturbationrho} that the functions $ \varrho(P(z))$ and $ \varrho(R(z))$ are analytic in a neighborhood of $0$. Also as $\pi^\top B e>0$, we obtain from \Cref{prop:analyticPR} that
\begin{equation*}
\frac{d  \varrho(P(z)) }{dz} \,\bigg|_{z=0}= \pi^\top P'(0) \, e = - \pi^\top W B e = - \pi^\top \! B e < 0,
\end{equation*}
and
\begin{equation*}
\frac{d  \varrho(R(z)) }{dz} \,\bigg|_{z=0}= \pi^\top R'(0) \, e = - \pi^\top B \, (2W-I)\, e = -\pi^\top\! B e < 0.
\end{equation*}
In particular, this implies that there exists $T>0$ such that $\varrho(P(t))$ and $\varrho(R(t))$ are strictly decreasing on the interval $(-T, T)$. Since $ \varrho(P(0)) = \varrho(R(0)) = 1$, the claim follows.
\end{proof}

\subsection{Proof of~\Cref{thm:doublystochastic}}

\vspace{1em}

In this section, $\|v\|_2$ denotes the standard Euclidean norm of $v \in \bbC^n$, and $\|M\|_2$ denotes  the spectral norm of $M \in \bbC^{n \times n}$. We know that $\varrho(M) \leqslant \|M\|_2$ for any $M \in \bbC^{n \times n}$, with equality if $M$ is Hermitian~\cite{meyer2000matrix}.

\begin{lemma}
\label{lemma:Cmatrix}
Suppose $B \in \bbR^{n \times n}$ be positive semidefinite, $\lambda \in \bbC$ with $|\lambda| \geqslant 1$, and $t > 0$. Then $C :=\lambda I + t(\lambda -1) B$ is invertible and $\|C^{-1}\|_2 \leqslant 1$.
\end{lemma}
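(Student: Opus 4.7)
The plan is to show that $C^* C \succeq I$ in the positive semidefinite order; this is equivalent to $\|Cx\|_2 \geqslant \|x\|_2$ for all $x \in \bbC^n$, which gives both invertibility of $C$ and the norm bound $\|C^{-1}\|_2 \leqslant 1$.

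First I would write out $C^* C$ explicitly. Since $B$ is real symmetric (being positive semidefinite), one obtains
\begin{equation*}
C^* C = \bigl(\bar\lambda I + t(\bar\lambda -1)B\bigr)\bigl(\lambda I + t(\lambda -1)B\bigr) = |\lambda|^2 I + 2t\bigl(|\lambda|^2 - \operatorname{Re}\lambda\bigr) B + t^2|\lambda-1|^2 B^2,
\end{equation*}
where I used the identity $\bar\lambda(\lambda-1) + \lambda(\bar\lambda-1) = 2|\lambda|^2 - 2\operatorname{Re}\lambda$.

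Next I would check that each of the three coefficient/matrix pairs contributes a PSD term to $C^* C - I$. The identity term gives $(|\lambda|^2 - 1)I \succeq 0$ since $|\lambda| \geqslant 1$. The $B^2$ term is PSD because $B \succeq 0$ implies $B^2 \succeq 0$. The $B$ term requires noting that $\operatorname{Re}\lambda \leqslant |\lambda|$, whence
\begin{equation*}
|\lambda|^2 - \operatorname{Re}\lambda \geqslant |\lambda|^2 - |\lambda| = |\lambda|\bigl(|\lambda|-1\bigr) \geqslant 0,
\end{equation*}
so together with $t > 0$ and $B \succeq 0$, the middle term is PSD as well. Summing, $C^* C - I \succeq 0$, i.e.\ $C^* C \succeq I$.

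Finally I would conclude: for any $x \in \bbC^n$, $\|Cx\|_2^2 = x^* C^* C x \geqslant x^* x = \|x\|_2^2$, which shows $C$ has trivial kernel and hence is invertible, and $\|C^{-1} y\|_2 \leqslant \|y\|_2$ for every $y = Cx$, giving $\|C^{-1}\|_2 \leqslant 1$. The only subtle point (and the main obstacle, though a mild one) is recognizing that the ``cross term'' coefficient $|\lambda|^2 - \operatorname{Re}\lambda$ is automatically nonnegative under the assumption $|\lambda| \geqslant 1$; everything else is straightforward expansion.
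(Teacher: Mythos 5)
Your proof is correct, and it takes a genuinely different route from the paper. The paper exploits the fact that $C$ is a polynomial in the Hermitian matrix $B$: it identifies the eigenvalues of $C$ as $\lambda + t(\lambda-1)\mu$ with $\mu \in \sigma(B)$, rules out $0$ among them to get invertibility, and then bounds $\varrho(C^{-1})$ via the per-eigenvalue estimate $|\lambda(1+\mu t)-\mu t| \geqslant |\lambda|(1+\mu t)-\mu t \geqslant 1$, using that the spectral norm of $C^{-1}$ equals its spectral radius (the paper says ``Hermitian,'' though for complex $\lambda$ the matrix $C^{-1}$ is only normal; normality is what is actually needed). Your argument instead establishes the operator inequality $C^*C \succeq I$ by direct expansion, using $\operatorname{Re}\lambda \leqslant |\lambda|$ to see that the cross-term coefficient $|\lambda|^2-\operatorname{Re}\lambda$ is nonnegative; this yields $\|Cx\|_2 \geqslant \|x\|_2$ for all $x$, hence invertibility and $\|C^{-1}\|_2 \leqslant 1$ in one stroke. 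What your route buys is elementarity and robustness: it never invokes the spectrum of $C$ or the norm-equals-spectral-radius fact for normal matrices, and it sidesteps the paper's imprecise ``Hermitian'' claim. What the paper's route buys is the explicit per-eigenvalue inequality \eqref{eq:comeslater}, which is reused verbatim later in the proof of \Cref{prop:inpainting} (to bound $|\lambda + t(\lambda-1)b_i|$ from below); with your formulation that scalar inequality would have to be extracted separately, since $C^*C \succeq I$ alone does not hand it to you eigenvalue by eigenvalue.
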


\begin{proof}
The eigenvalues of $C$ are of the form $\lambda + t(\lambda - 1)\mu$ for some $\mu \in \sigma(B)$. Since $B$ is positive semidefinite, $\mu \geqslant 0$. If $C$ were singular, we would have $\lambda + t(\lambda - 1)\mu = 0$ for some $t > 0$. However, this implies $\lambda = t\mu/(1+t\mu)$, so that $|\lambda| = \lambda <  1$, contradicting our assumption. Hence, $C$ must be invertible.

Since $C^{-1}$ is Hermitian, we have $\|C^{-1}\|_2 =\varrho(C^{-1})$. The eigenvalues of $C^{-1}$ are of the form $1/(\lambda (1+ \mu t) - \mu t), \, \mu \in \sigma(B)$, and
\begin{equation}
\label{eq:comeslater}
|\lambda (1+ \mu t) - \mu t | \geqslant |\lambda|  (1+\mu t) - \mu t \geqslant 1+\mu t - \mu t = 1.
\end{equation}
Hence, $\varrho(C^{-1}) \leqslant 1$.
\end{proof}

\begin{lemma}
\label{lemma:doublystochastic}
Let $W$ be doubly stochastic. Then $\|Wv\|_2 \leqslant \|v\|_2$ for all $v \in \bbC^n$.
Moreover, if $W^\top W$ is irreducible, then equality holds only if $v$ is a scalar multiple of $e$.
\end{lemma}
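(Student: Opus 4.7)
The plan is to reduce both claims to spectral properties of the symmetric nonnegative matrix $M := W^\top W$. Since $W$ is doubly stochastic, a direct computation shows $Me = W^\top (We) = W^\top e = e$, and symmetry of $M$ then forces $e^\top M = e^\top$, so $M$ is itself doubly stochastic. In particular $M$ is nonnegative with row sums equal to $1$, so $\varrho(M)\leqslant 1$, and the identity $Me=e$ shows $\varrho(M)=1$. Because $M$ is symmetric (hence normal),
\begin{equation*}
\|Wv\|_2^2 \;=\; v^*Mv \;\leqslant\; \varrho(M)\,\|v\|_2^2 \;=\; \|v\|_2^2,
\end{equation*}
which gives the contraction inequality.

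For the equality case, assume in addition that $M = W^\top W$ is irreducible. Since $M$ is symmetric positive semidefinite, its eigenvalues lie in $[0,\varrho(M)] = [0,1]$, and we can choose an orthonormal eigenbasis $u_1,\dots,u_n$ with eigenvalues $\lambda_1\geqslant \lambda_2 \geqslant \dots \geqslant \lambda_n\geqslant 0$. The Perron--Frobenius theorem applied to the irreducible nonnegative matrix $M$ tells us that $\varrho(M)=1$ is a simple eigenvalue, and since $Me=e$ we may take $u_1 = e/\|e\|_2$. Expanding $v = \sum_{i} c_i u_i$ yields
\begin{equation*}
\|v\|_2^2 - \|Wv\|_2^2 \;=\; \sum_{i=1}^n (1-\lambda_i)\,|c_i|^2,
\end{equation*}
a sum of nonnegative terms. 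If equality holds in the contraction bound, then every term with $\lambda_i < 1$ must vanish; simplicity of the eigenvalue $1$ then forces $c_i = 0$ for $i\geqslant 2$, so $v$ is a scalar multiple of $u_1$, i.e.\ of $e$.

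There is no serious obstacle here; the only substantive input is the Perron--Frobenius statement that irreducibility promotes $\varrho(M)=1$ to a \emph{simple} eigenvalue, which is exactly why the hypothesis on $W^\top W$ appears in the lemma. The rest is the bookkeeping above, which shows why the hypothesis is used in the lemma (as opposed to irreducibility of $W$ itself, which would not be enough to make $W^\top W$ Perron--Frobenius irreducible).
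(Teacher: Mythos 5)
Your proof is correct and follows essentially the same route as the paper: both reduce the claim to the quadratic form $v^*(W^\top W)v$, bound it by $\varrho(W^\top W)\leqslant 1$, and handle the equality case via the spectral theorem together with the Perron--Frobenius simplicity of the eigenvalue $1$ of the irreducible stochastic matrix $W^\top W$. Your eigenbasis expansion just makes explicit the step the paper compresses into ``deduce $Sv=v$, then $v=ce$ by irreducibility.''
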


\begin{proof}
Since both $W^\top$ and $W$ are stochastic, we have $\varrho(W^\top W) \leqslant \varrho(W^\top) \varrho(W)=1$. Hence, for any $v \in \bbC^n$, 
\begin{equation*}
\|Wv\|_2^2 = v^* (W^\top W v) \leqslant \varrho(W^\top W) \|v\|_2^2 \leqslant \|v\|_2^2.
\end{equation*}

Now, suppose $\|Wv\|_2 = \|v\|_2$ for some $v \in \bbC^n$. We can write that is 
\begin{equation}
\label{eq:qf}
v^* S v = v^* v, \qquad S:=W^\top W.
\end{equation}
Since $S$ is positive semidefinite and $\varrho(S) \leqslant 1$, we have $\sigma(S) \subset [0,1]$. Combining this with \eqref{eq:qf} and using the spectral theorem for Hermitian matrices, we can deduce that $Sv=v$.
Finally, because $S$  is stochastic and irreducible, it follows that $v= c e$ for some $c \in \bbC$.
\end{proof}

\begin{proof}[Proof of \Cref{thm:doublystochastic}]
\label{proof:doublystochastic}
Since $I-tB$ is symmetric and $B$ is positive semidefinite, we have for all  $0< t < 2/\varrho(B)$,
\begin{equation}
\label{eq:normB}
\|I-tB\|_2= \varrho(I-tB) \leqslant 1.
\end{equation}
Combining this with \Cref{lemma:doublystochastic}, it follows that for all $0< t < 2/\varrho(B)$,
\begin{equation*}
\varrho(P(t))  \leqslant  \|P(t) \|_2 \leqslant \|W \|_2 \|(I-tB) \|_2 \leqslant 1.
\end{equation*}

To complete the proof, we must show that if $0< t < 2/\varrho(B)$, then $| \lambda | < 1$ for all $\lambda \in \sigma(P(t))$. 
Suppose, to the contrary, that there exists $\lambda \in \sigma(P(t))$ with $|\lambda| = 1$. Then, for some left eigenvector $v \in \bbC^n$,
\begin{equation}
\label{eq:eigeqn1}
\lambda^* v = P(t)^\top v = (I-tB) W^\top v.
\end{equation}
Taking norms on both sides and using \eqref{eq:normB}, we get $\|v\| \leqslant \|W^\top v\|$. Since $W^\top$ is doubly stochastic and $WW^\top$ is assumed to be irreducible, it follows from~\Cref{lemma:doublystochastic} that $v=c e$ for some $c \in \bbC$. Substituting this into \eqref{eq:eigeqn1} yields
\begin{equation}
\label{eq:eigeqn2}
\lambda^* e =  (I-tB) W^\top e = e-tBe.
\end{equation}
Thus, $(1-\lambda)/t \in \sigma(B)$. Since $B$ is symmetric, the only possibilities are $\lambda = \pm 1$. However, if $\lambda=1$, we have $Be=0$ from \eqref{eq:eigeqn2}, which contradicts our assumption. On the other hand, $\lambda=1$ yields $Be=(2/t) e$, so that $2/t \leqslant \varrho(B)$. This contradicts the condition $0<t < 2 /\varrho(B)$. Thus, we must have $| \lambda | < 1$ for all $\lambda \in \sigma(P(t))$.

We will prove using contradiction that $\varrho(R(t)) < 1$ whenever $0< t < 2/\varrho(B)$. Suppose, to the contrary, that there exists $\lambda \in \sigma(R(t))$ with $|\lambda| \geqslant 1$.  Then there exists a right eigenvector $v \in \bbC^n$ such that
\begin{equation}
\label{eq:eigeqn3}
\lambda v =  (I-W)v + (I+tB)^{-1}(2W-I)v,
\end{equation}
where the inverse exists for $t >0$. Rearranging gives $(I-tB)Wv = Cv$, where $C$ is as defined in~\Cref{lemma:Cmatrix}. Since $t > 0$, \Cref{lemma:Cmatrix} guarantees that $C^{-1}$ exists, so that
\begin{equation}
\label{eq:usedlater}
v = C^{-1} (I - tB)Wv.
\end{equation}
Taking norms on both sides and using \eqref{eq:normB} together with the bound $\|C^{-1}\|_2 \leqslant 1$ from~\Cref{lemma:Cmatrix}, we obtain
$\|v\|_2 \leqslant \|Wv\|_2$. Since $W$ is doubly stochastic, it follows from~\Cref{lemma:doublystochastic} that $\|v\|_2 = \|Wv\|_2$. Moreover, as $W^\top W$ is irreducible, ~\Cref{lemma:doublystochastic} implies that $v= c e$ for some $c \in \bbC$. Substituting this into \eqref{eq:eigeqn3} yields
\begin{equation}
\label{eq:eigeqn4}
e = \lambda  e + t \lambda Be.
\end{equation}
The rest of the argument is similar to that for $P(t)$. It follows from \eqref{eq:eigeqn4} that $\lambda \in \bbR$, so that $\lambda = \pm 1$. However, $\lambda = 1$ yields $Be=0$, and $\lambda = -1$ yields $\varrho(B) \geqslant 2/t$, both of which  contradict our assumptions. Therefore, we must have $| \lambda | < 1$ for all $\lambda \in \sigma(R(t))$.
\end{proof}

\subsection{Proof of~\Cref{prop:inpainting}}

\vspace{1em}

We need the following notations. For $v \in \bbC^n$ and $M \in \bbC^{n \times n}$, we define $|v| \in \bbR^n$ and $|M| \in \bbR^{n \times n}$ by taking absolute values componentwise. For $M, N \in \bbR^{n \times n}$, we write $M \leqslant N$ to mean $M_{ij} \leqslant N_{ij}$ for all $i,j$. Finally, for $v \in \bbC^n$, we write $\diag(v) \in \bbC^{n \times n}$ for the diagonal matrix whose diagonal entries are the components of $v$.

\begin{lemma}
\label{lemma:superstochastic}
Let $W \geqslant 0$ be irreducible with $\varrho(W) = 1$. Suppose $p \geqslant 0$ satisfies $p \leqslant Wp$. Then $p = Wp$
and $p >0$.
\end{lemma}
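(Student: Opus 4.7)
The natural approach is to leverage the Perron--Frobenius theorem for irreducible nonnegative matrices. Since $W \geqslant 0$ is irreducible with $\varrho(W) = 1$, Perron--Frobenius guarantees that $1$ is a simple eigenvalue of $W$, its (right) eigenspace is one-dimensional and spanned by a strictly positive vector, and there exists a strictly positive left eigenvector $\pi > 0$ with $\pi^\top W = \pi^\top$.

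The first step is to pair the hypothesis $p \leqslant Wp$ with $\pi$. Multiplying on the left by $\pi^\top$ gives $\pi^\top p \leqslant \pi^\top W p = \pi^\top p$, so all inequalities are in fact equalities, and therefore $\pi^\top(Wp - p) = 0$. Since $\pi > 0$ componentwise while $Wp - p \geqslant 0$ componentwise, the vanishing of this inner product forces $Wp - p = 0$. This establishes $p = Wp$.

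The second step is to deduce strict positivity from $p = Wp$ together with $p \geqslant 0$ nonzero (which is clearly an implicit assumption, since $p = 0$ satisfies the hypotheses but not the conclusion $p > 0$). With $p$ a nonzero nonnegative eigenvector of $W$ for the Perron eigenvalue $1$, the one-dimensionality of the Perron eigenspace forces $p$ to be a scalar multiple of the positive Perron right eigenvector; the multiplicative constant must be positive as $p \geqslant 0$ is nonzero, so $p > 0$.

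I do not anticipate any substantial obstacle: this is essentially a one-line consequence of Perron--Frobenius once one has the idea of testing the inequality against the left Perron eigenvector. The only mild subtlety worth noting explicitly is the implicit exclusion of $p = 0$ in the hypothesis.
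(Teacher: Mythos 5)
Your proof is correct. The paper does not actually supply an argument for this lemma---it simply cites Berman and Plemmons (Chapter 2)---and what you give is the standard proof of that cited result: pairing the subinvariance inequality with the strictly positive left Perron eigenvector to force $Wp=p$, then invoking simplicity of the Perron root to get $p>0$. Your remark about the implicit hypothesis $p\neq 0$ is also apt; as stated the conclusion $p>0$ fails for $p=0$, but in the paper's only use of the lemma one has $p=|v|$ with $v\neq 0$, so the gap is harmless in context.
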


\begin{proof}
See~\cite[Chapter 2]{berman1994}.
\end{proof}

\begin{lemma}
\label{lemma:inpainting}
Let $W \geqslant 0$ be irreducible with \(\varrho(W)=1\). Let $D \in \bbC^{n\times n}$ be a diagonal matrix with $|D| \leqslant I$. Suppose there exists $v \neq 0$ such that either $WDv = v$ or $DWv=v$. Then necessarily $|D|=I$.
\end{lemma}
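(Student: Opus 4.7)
The plan is to take absolute values in the eigenvector equation and show that $|v|$ satisfies $|v|\leqslant W|v|$, so that \Cref{lemma:superstochastic} applies and yields $|v|=W|v|$ and $|v|>0$. Forcing every inequality in the chain $|v|\leqslant \cdots \leqslant W|v|$ to be an equality will then produce $|D||v|=|v|$, and because $|v|$ is strictly positive and $|D|$ is diagonal, this immediately gives $|D|=I$.

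In detail, first consider the case $WDv=v$. Using $W\geqslant 0$ together with the standard componentwise inequalities $|Mw|\leqslant |M|\,|w|$ and the fact that $|Dv|=|D|\,|v|$ since $D$ is diagonal, I would write
\begin{equation*}
|v|\;=\;|WDv|\;\leqslant\; W|D|\,|v|\;\leqslant\; W|v|,
\end{equation*}
where the second inequality uses $|D|\leqslant I$ and $W\geqslant 0$. By \Cref{lemma:superstochastic}, $|v|=W|v|$ and $|v|>0$. Tracing back through the chain, I get $W|v|=W|D|\,|v|$, that is $W(I-|D|)|v|=0$. To extract $(I-|D|)|v|=0$ from this I would invoke the positive left Perron eigenvector $\pi>0$ of $W$ (guaranteed by irreducibility and $\varrho(W)=1$): multiplying on the left by $\pi^{\top}$ and using $\pi^{\top}W=\pi^{\top}$ gives $\pi^{\top}(I-|D|)|v|=0$. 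Since $(I-|D|)|v|\geqslant 0$ and $\pi>0$, this forces $|D|\,|v|=|v|$, and since every component of $|v|$ is strictly positive, each diagonal entry of $|D|$ equals $1$.

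For the case $DWv=v$, the argument is analogous but slightly more direct. I would write
\begin{equation*}
|v|\;=\;|DWv|\;=\;|D|\,|Wv|\;\leqslant\;|D|\,W|v|\;\leqslant\;W|v|,
\end{equation*}
apply \Cref{lemma:superstochastic} to conclude $|v|=W|v|$ and $|v|>0$, and then read off from equality in the penultimate step that $|D|\,|v|=|D|\,W|v|=W|v|=|v|$, again giving $|D|=I$.

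The only potentially delicate step is the passage from $W(I-|D|)|v|=0$ to $(I-|D|)|v|=0$ in the first case, because $W$ need not be invertible; this is the reason I pull in the positive left Perron eigenvector. Everything else is bookkeeping with componentwise absolute values and the nonnegativity of $W$.
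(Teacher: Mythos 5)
Your argument is correct and follows essentially the same route as the paper's proof: take componentwise absolute values to get $|v|\leqslant W|D|\,|v|\leqslant W|v|$, invoke \Cref{lemma:superstochastic} to obtain $|v|=W|v|>0$, and then use the positive left Perron eigenvector $\pi$ together with $I-|D|\geqslant 0$ to force $|D|=I$. The only difference is cosmetic (you write out the $DWv=v$ case explicitly, which the paper dismisses as identical), so no further changes are needed.
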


\begin{proof}
Suppose there exists $v \neq 0$ such that $WDv=v$. Let $p=|v|$. Since $|D| \leqslant I$, we have
\begin{equation}
\label{eq:temp1}
p=|v| =  |WDv| \leqslant W |D| p \leqslant W p.
\end{equation}
Because $p \geqslant 0$, it follows from~\Cref{lemma:superstochastic} that $p=Wp$ and $p>0$. Substituting this into \eqref{eq:temp1} yields
\begin{equation}
\label{eq:lemma:inpainting:1}
p = W |D| p.
\end{equation}

Multiplying \eqref{eq:lemma:inpainting:1} by the left Perron vector of $W$ gives $\pi^\top p = \pi^\top W |D| p = \pi^\top |D| p$, so that $\pi^\top (I - |D|)p = 0$. As $\pi, p > 0$ and $|D|\leqslant I$, this forces $|D| = I$. An identical argument applies if $DWv=v$.
\end{proof}

\begin{proof}[Proof of \Cref{prop:inpainting}]
Suppose there exists $\lambda \in \sigma(P(t))$ with $|\lambda| \geqslant 1$, so that  $\lambda v = P(t)v$ for some nonzero $v \in \bbC^n$. Since $\lambda \neq 0$, we can write this as  
\begin{equation*}
v=WDv, \qquad D:= \frac{1}{\lambda} (I - tB).
\end{equation*}

Let $B=\diag(b)$, where $b \geqslant 0$ and $b \neq 0$ by assumption. Because $|\lambda| \geqslant 1$, we have $|D| \leqslant I$ whenever $0 < t < 2/\varrho(B)$. By \Cref{lemma:inpainting}, this forces $|D|=I$. Since $b \neq 0$, there exists $i$ such that $b_i >0$. For this $i$, we cannot have $|1-tb_i| = |\lambda|$, because $|1-tb_i| < 1$ for $0 < t < 2/\varrho(B)$. This shows we cannot have $|\lambda| \geqslant 1$ for any $\lambda \in \sigma(P(t))$.

Similarly, suppose $\lambda \in \sigma(R(t))$ with $|\lambda| \geqslant 1$, and $v \neq 0$ so that \eqref{eq:usedlater} holds. Since $B=\diag(b)$, we can write 
\begin{equation}
\label{eq:temp2}
v= DWv, \qquad D:=\diag(d), \: d_i = \frac{1-tb_i}{\lambda+t(\lambda-1)b_i}.
\end{equation}
From \eqref{eq:comeslater}, it follows that for $t>0$,
\begin{equation}
\label{eq:gt1}
|\lambda+t(\lambda-1)b_i| \geqslant 1.
\end{equation}
Moreover if $0 < t < 2/\varrho(B)$, then $|1-tb_i| \leqslant 1$. Hence, from \eqref{eq:temp2}, we obtain $|D| \leqslant I$. By~\Cref{lemma:inpainting}, this forces $|D| = I$.  However, we know that $b_i >0$ for some $i$. For such as $i$, 
\begin{equation}
|\lambda+t (\lambda-1)b_i| = |1-tb_i| < 1,
\end{equation}
which contradicts \eqref{eq:gt1}. Thus, we must have $|\lambda| < 1$ for any $\lambda \in \sigma(R(t))$.
\end{proof}

\subsection{Proof of~\Cref{prop:nondiagB}}

\vspace{1em}

We begin by noting that if $B=\alpha I + \beta E$, then $\alpha, \alpha + n\beta \in \sigma(B)$. Since $B$ is positive semidefinite and $Be \neq 0$, it follows that $\alpha \geqslant 0$ and $\alpha + n\beta > 0$.  Hence,
\begin{equation}
\label{eq:boundrho}
\varrho(B) \geqslant \max  \big( \alpha, \alpha + n\beta \big).
\end{equation}

Let $\lambda \in \sigma(P(t))$. Then there exists a left eigenvector $u \in \bbC^n$ such that 
\begin{equation}
\label{eq:eigen}
\lambda u^* = u^* W(I - tB).
\end{equation}
Multiplying on the right by $e$ gives $\lambda u^*e=(1-\alpha t-n\beta t ) \, u^* e$. Thus, either $\lambda = 1-t(\alpha + n\beta)$ or $u^*e=0$. In the first case, it follows from~\eqref{eq:boundrho} that $|\lambda | < 1$ for all $0 < t < 2/\varrho(B)$. In the second case, since $EW^\top u=0$, taking the complex conjugate of \eqref{eq:eigen} yields
\begin{equation}
\label{eq:temp3}
\lambda^* u = (I - tB)W^\top u = (1-\alpha t)W^\top  u.
\end{equation}
If $1 - \alpha t = 0$, then trivially $|\lambda| < 1$. Otherwise, $\lambda^*/(1 - \alpha t) \in \sigma(W^\top)$. We consider the following cases:
\begin{itemize}
\item If $\alpha > 0$, then as $\varrho(W^\top) \leqslant 1$, it follows from \eqref{eq:boundrho} that $|\lambda| \leqslant |1-\alpha t| < 1$ for all $0 < t < 2/\varrho(B)$.
\item If $\alpha = 0$, then \eqref{eq:temp3} reduces to $\lambda u^* = u^*W$. Suppose $|\lambda| = 1$. Because $W$ is stochastic and primitive, this forces $u= c \pi, c \in \bbC$, where $\pi$ is the left Perron vector of $W$. However, since  $u^* e = 0$ and $\pi^\top e > 0$, we get $c = 0$, contradicting the assumption that $u \neq 0$.
\end{itemize}
Thus, in all cases, $|\lambda| < 1$. Since this holds for any $\lambda \in \sigma(P(t))$, we conclude that $\varrho(P(t)) < 1$.

\vspace{1em}

Now, let $\lambda\in\sigma(R(t))$. Then there exists a nonzero  $u\in\mathbb C^n$ such that
\begin{equation}
\label{eq:eigenR}
\lambda\,u^*  =  u^* R(t) =  u^*(I-W) + u^*(I+tB)^{-1}(2W-I).
\end{equation}
Right-multiplying \eqref{eq:eigenR} by $e$ gives
\begin{equation*}
\lambda\,u^*e  =  (1+\alpha t+n\beta t)^{-1}\,u^*e.
\end{equation*}
Hence, either $\lambda=1/(1+(\alpha+n\beta)t)$ or $u^*e=0$. In the first case, since $\alpha+n\beta > 0$, we have $|\lambda|= \lambda< 1$ for all $t >0$. In the second case, taking the complex conjugate of \eqref{eq:eigenR} yields
\begin{equation*}
\lambda^* u  =  (I-W^\top)u + (2W^\top-I)(I+tB)^{-1}u. 
\end{equation*}
Since $u^*e=0$, we have $(I+tB)^{-1}u=u/(1+\alpha t)$. Substituting this, we obtain
\begin{equation*}
\lambda^* u  =  \frac{1}{1+\alpha t}\big(\alpha t\,I + (1-\alpha t)W^\top\big)u.
\end{equation*}
From this, we conclude that 
\begin{equation*}
\lambda  =  \frac{\alpha t + (1-\alpha t)\mu^*}{1+\alpha t}
\end{equation*}
for some $\mu \in \sigma(W^\top)$. As before, since $W$ is primitive and stochastic and $u^*e=0$, we must have $|\mu|<1$. If $\alpha = 0$, then $\lambda=\mu^*$, so that $|\lambda|<1$. On the other hand, if $\alpha > 0$, then for any $0 < t < 2/\varrho(B)$ we have from \eqref{eq:boundrho} that $0 < \alpha t < 2$. Thus, 
\begin{equation*}
   |\lambda|
     =  \frac{|\,\alpha t + (1-\alpha t)\mu\,|}{1+\alpha t}
    \  \leqslant\ \frac{\alpha t + |1-\alpha t|\,|\mu|}{1+\alpha t}
    \ <\ \frac{\alpha t + |1-\alpha t|}{1+\alpha t} < 1.
\end{equation*}
Therefore, in all cases, $|\lambda|<1$. Since this holds for any $\lambda\in\sigma(R(t))$, we conclude that $\varrho(R(t))<1$.

\section*{Acknowledgment}

The authors thank Arghya Singha for helpful discussions and clarifications. K.~N.~Chaudhury acknowledges partial support from grant STR/2021/000011, ANRF, Government of India.

\bibliographystyle{cas-model2-names}
\bibliography{references}

\end{document}